\title{Topological Modular Forms of Level $3$}
\date{September 10, 2008}
\author{Mark Mahowald}
\address{Department of Mathematics \\
Northwestern University \\
Evanston, IL}
\email{mark@math.northwestern.edu}
\author{Charles Rezk}
\address{Department of Mathematics \\
University of Illinois at Urbana-Champaign \\ 
Urbana, IL}
\email{rezk@math.uiuc.edu}
\thanks{The second author was partially supported by National Science
  Foundation grants DMS-0203936 and DMS-0505056.}
\numberwithin{equation}{section}
  \let\c@subsection\c@equation
\theoremstyle{plain}   
\newtheorem{thm}[subsection]{Theorem}
\newtheorem{prop}[subsection]{Proposition}
\newtheorem{cor}[subsection]{Corollary}
\newtheorem{lemma}[subsection]{Lemma}
\theoremstyle{remark}
\newtheorem{rem}[subsection]{Remark}
\theoremstyle{plain}
\DeclareMathOperator{\id}{id}
\newcommand{\ra}{\rightarrow}
\newcommand{\xra}{\xrightarrow}
\DeclareMathOperator{\Ext}{Ext}
\newcommand{\len}[1]{\lvert#1\rvert}
\newcommand{\F}{\mathbb{F}}
\newcommand{\Z}{\mathbb{Z}}
\newcommand{\Q}{\mathbb{Q}}
\newcommand{\eev}{\wedge}
\newcommand{\sm}{\wedge} 
\newcommand{\dfn}{\textbf}
\newcommand{\ModuliEll}{\mathcal{M}}
\newcommand{\ModuliGenEll}{\overline{\mathcal{M}}}
\newcommand{\ModuliWeier}{\mathcal{M}_{\mathrm{Weier}}}
\newcommand{\Spec}{\operatorname{Spec}}
\newcommand{\tmf}{\mathrm{tmf}}
\newcommand{\TMF}{\mathrm{TMF}}
\newcommand{\MF}{\mathrm{MF}}
\newcommand{\mf}{\mathrm{mf}}
\newcommand{\String}{\mathrm{String}}
\newcommand{\bo}{\mathrm{bo}}
\newcommand{\bsp}{\mathrm{bsp}}
\newcommand{\Si}{\Sigma^\infty}
\newcommand{\CP}{\mathbb{CP}}
\newcommand{\sq}{\operatorname{Sq}}
\newcommand{\sstackN}{\mathcal{N}}
\newcommand{\Tot}{\mathrm{Tot}}
\def\elt{\circle*{3}}
\def\square{\square{1}}
\begin{document}

\begin{abstract}
We describe and compute the homotopy of spectra of topological modular
forms of level $3$.  We give some computations related to the
``building complex'' associated to level $3$ structures at the prime
$2$.   Finally, we note the existence of a number of connective models
of the spectrum $\TMF(\Gamma_0(3))$.  
\end{abstract}

\maketitle


\section{Introduction}

In this paper we collect a number of results related to the spectrum
of ``topological modular forms of level $3$'', denoted by
$\TMF(\Gamma_0(3))$.  We investigated this spectrum as part of an
attempt to describe the homotopy of the $K(2)$-local sphere (at the
prime $2$) in terms of modular forms.  We think this work will be most
useful in the context of a conjecture of Mark Behrens, which we
describe below.

Along the way, we considered a cosimplicial spectrum associated to a
certain ``building'', constructed using the moduli of certain kinds of
subgroups of elliptic curves.   The realization of this cosimplicial
spectrum is denoted $Q(3)$; it is discussed in
\S\ref{sec:building-complex}.  Behrens conjectures
\cite[1.6.1]{behrens-buildings-elliptic-curves} that there is a
cofiber sequence 
$$D_{K(2)}L_{K(2)}Q(3) \xra{Df} \tilde{S} \xra{f} L_{K(2)}Q(3)$$
where $D_{K(2)}$ denotes Spanier-Whitehead duality in the $K(2)$-local
category (at the prime $2$), and $\tilde{S}$ is a degree $2$ Galois
extension of the 
$K(2)$-local sphere.  More precisely,
$\tilde{S}=E_2^{h\tilde{\mathbb{G}}_2}$, where 
$\tilde{\mathbb{G}}_2=\tilde{\mathbb{S}}_2\rtimes \mathrm{Gal}$, with
$\tilde{\mathbb{S}}_2$ an index $2$ subgroup of the Morava stabilizer
group, the kernel of
$$\mathbb{S}_2 \xra{N} \Z_2^\times \ra (\Z/8)^{\times}/\{1,3\}.$$
Behrens has proved a version of this conjecture at the prime $3$
\cite{behrens-modular-description}.  

In this paper we give results related to the calculation of the
homotopy groups of $Q(3)$; we also describe how some known elements in
the homotopy groups of spheres are detected in it.  In doing so we
give a complete calculation of the homotopy of $\TMF(\Gamma_0(3))$.
We also describe some connected versions of $\TMF(\Gamma_0(3))$, which
appear naturally in the cobordism spectrum $M\String$.

The spectrum $\TMF(\Gamma_0(3))$, or one very much like it, has
appeared in a different context.  In
\cite{hu-kriz-real-oriented-homology}, the authors construct ``real''
versions of the Johnson-Wilson spectra $E(n)$; these are
$\Z/2$-equivariant ring spectra, which admit an orientation by the
``real'' complex bordism spectrum.  The homotopy fixed points of the
$\Z/2$-action on the $n$th real Johnson-Wilson spectrum is denoted
$ER(n)$.  As is clear from the work of
\cite{kitchloo-wilson-fibrations-real-spectra}, the spectrum $ER(2)$
is very much like the spectrum $\TMF(\Gamma_0(3))$.  In particular,
their calculation of $\pi_*ER(2)$ gives essentially the same answer as
our calculation of $\pi_*\TMF(\Gamma_0(3))$.  (Note that
$\TMF(\Gamma_0(3))$ and $ER(2)$
are not identical as ring spectra, because they are $\Z/2$-fixed
points of $\TMF(\Gamma_1(3))$ and $E(2)$ respectively, whose formal
groups are not isomorphic.  Presumably the construction
of \cite{hu-kriz-real-oriented-homology} can be carried out to
construct a ``real'' version of $\TMF(\Gamma_1(3))$.)

We would like to take this opportunity to dedicate this paper to
Professor Hirzebruch; in particular, we would like to express our
thanks for his book on modular forms
\cite{hirzebruch-manifolds-mod-forms}.

\section{Spectra of topological modular forms}

In this section we describe the examples of spectra of topological
modular forms  we are interested in.  
It will be convenient to use the
language of moduli stacks to identify them.  Thus, let $\ModuliEll$
denote the moduli stack of elliptic curves; an elliptic curve over a
base scheme $S$ is a smooth proper morphism $E\ra S$ whose geometric
fibers are elliptic curves.  Likewise, let $\ModuliGenEll$ denote the
moduli stack of generalized elliptic curves; this is a
compactification of $\ModuliEll$ obtained by ``adding the cusp''.
(See \cite{katz-mazur-arithmetic-moduli}.)  Thus, any morphism
$f\colon S\ra \ModuliGenEll$ 
determines a generalized elliptic curve $C_f\ra S$.  

There exists a line bundle $\omega\ra \ModuliGenEll$, associated to
the cotangent space of the identity section of a generalized elliptic
curve.  We write $\omega(C)$ for the line bundle over $S$ associated
to the generalized elliptic curve $C\ra S$.  When $C$ is smooth, we
can identify $\Gamma(S,\omega(C))$ with the set of invariant $1$-forms
on $C$.  A \dfn{modular form of level $1$ and weight $k$} is a section
of $\omega^{\otimes k}\ra \ModuliGenEll$.  Explicitly, a modular form
of weight $k$ is a function $g$ which associates to each pair $(C\ra
S, \eta)$ with $C$ a generalized elliptic curve over $S$, and $\eta\in
\Gamma(S,\omega)$, an element $g\in \mathcal{O}_S$, which is
compatible with base change and such that $g(C,\lambda
\eta)=\lambda^{-k}g(C,\eta)$ for $\lambda\in \mathcal{O}_S^\times$.
We write
$\mf_k=\Gamma(\ModuliGenEll,\omega^{\otimes k}$).

An \dfn{elliptic spectrum} \cite{ando-hopkins-strickland-thm-of-cube}
is a triple $(E,C,\phi)$ consisting of an even periodic ring spectrum
$E$, a generalized elliptic curve $C$ over $\pi_0 R$, and an
isomorphism $\phi$ from 
the formal group of $E$ to the formal completion of $C$ at the
identity.  (``Even periodic'' means: $\pi_1R=0$, and there exists
$u\in \pi_2R$ which is invertible in the graded ring $\pi_*R$.)  

All the examples of spectra of topological modular forms we need arise
from the theorem of Goerss-Hopkins-Miller.
\begin{thm}
There is a sheaf $\mathcal{O}_{\tmf}$ of $E_\infty$-ring spectra over the
stack $\ModuliGenEll$ in the \'etale topology, which has the following
property. 
For \'etale morphisms $f\colon \Spec(R)\ra \ModuliGenEll$, there is a
natural structure of elliptic spectrum $(\mathcal{O}_{\tmf}(f),C_f,\phi)$,
where $\pi_0 \mathcal{O}_{\tmf}(f) = R$ and $C_f$ is the generalized elliptic
curve over $R$ classified by $f$.
\end{thm}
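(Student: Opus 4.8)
The plan is to construct $\mathcal{O}_{\tmf}$ by obstruction theory, the point being that its homotopy sheaves $\pi_{2t}\mathcal{O}_{\tmf}\cong\omega^{\otimes t}$ and $\pi_{2t+1}\mathcal{O}_{\tmf}=0$ are forced on us, so that only the coherent multiplicative structure is in question. First I would fix the site: take the small \'etale site of $\ModuliGenEll$, whose objects are \'etale morphisms $f\colon\Spec R\ra\ModuliGenEll$; such an $f$ classifies a generalized elliptic curve $C_f/R$, and the formal completion $\widehat{C}_f$ of $C_f$ along its identity section is a one-dimensional formal group over $R$. The first technical step is to check that $\widehat{C}_f$ is Landweber exact over $R$ for every such $f$: away from the cusp this is, \'etale-locally, the Landweber exactness of the formal group of a Weierstrass curve, while near the cusp it follows from the Tate curve having multiplicative reduction, so that its formal group is $\widehat{\mathbb{G}}_m$ over $\Z((q))$. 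The Landweber exact functor theorem then supplies, functorially up to homotopy in $f$, an even periodic ring spectrum $E(f)$ with $\pi_0E(f)=R$ and a canonical isomorphism from the formal group of $E(f)$ to $\widehat{C}_f$; these assemble into a presheaf
\[
\mathcal{O}^{\mathrm{top}}\colon(\Et/\ModuliGenEll)^{\op}\lra\ho(\text{ring spectra})
\]
valued in the homotopy category of homotopy-commutative ring spectra, with homotopy presheaves $\pi_{2t}\mathcal{O}^{\mathrm{top}}\cong\omega^{\otimes t}$ and $\pi_{2t+1}\mathcal{O}^{\mathrm{top}}=0$.

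The heart of the matter is to refine $\mathcal{O}^{\mathrm{top}}$ to a \emph{sheaf of $E_\infty$-ring spectra}, and here I would invoke the Goerss-Hopkins obstruction theory. It describes the space of $E_\infty$-ring refinements of a homotopy-commutative ring spectrum --- or, in its functorial form, of a presheaf of such --- by a Postnikov-type tower whose successive obstructions lie in Andr\'e-Quillen cohomology groups relative to the $E_\infty$-operad. Landweber exactness is precisely what renders these groups accessible: it forces the relevant topological cotangent complexes to be formal, concentrated in a single homotopical degree, so that over an affine $\Spec R$ in the \'etale site the Andr\'e-Quillen groups controlling the $E_\infty$-realization of $E(R)$ collapse to the quasi-coherent cohomology $H^s(\Spec R;\omega^{\otimes t})$, which vanishes for $s\geq 1$. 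The central case is Morava $E$-theory $E_2$, whose homotopy ring is built on the Lubin-Tate ring: that ring is regular local, and it is exactly this regularity that annihilates the obstruction groups --- this is the content of the Hopkins-Miller theorem, which equips $E_2$ with a functorial $E_\infty$-structure, natural in the pair (perfect field of characteristic $p$, height $2$ formal group) and carrying compatible actions of the automorphism group of the formal group and of the Galois group. Granting the analogous, easier, vanishing on the ordinary locus and at the cusp, each $E(f)$ acquires an essentially unique $E_\infty$-refinement, the refinements are compatible along the morphisms of the site, and the resulting presheaf of $E_\infty$-rings is in fact a sheaf, since its homotopy presheaves are the quasi-coherent sheaves $\omega^{\otimes t}$ (which satisfy \'etale descent) and the pertinent descent spectral sequences converge.

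Concretely one can organize the construction around a chromatic fracture square: on the supersingular locus --- the formal neighborhoods of the finitely many supersingular points of $\ModuliGenEll$ in each positive characteristic, the only places where $\widehat{C}_f$ has height $2$ --- one uses the Morava $E_2$ just described; on the ordinary locus and in a neighborhood of the cusp one uses spectra assembled from $KU$ and its $q$-adic completions, which carry their standard $E_\infty$-structures; and the pieces are glued along their common overlap. Having produced $\mathcal{O}_{\tmf}$, one recovers the asserted structure by restriction: for an \'etale $f\colon\Spec R\ra\ModuliGenEll$ the triple $(\mathcal{O}_{\tmf}(f),C_f,\phi)$ --- with $\phi$ the Landweber isomorphism from the formal group of $\mathcal{O}_{\tmf}(f)$ to $\widehat{C}_f$ --- is an elliptic spectrum, functorially in $f$, and $\pi_0\mathcal{O}_{\tmf}(f)=R$ by construction. (Historically one first builds the sheaf over $\ModuliEll$ and only afterward extends it across the cusp.)

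The main obstacle is the Hopkins-Miller theorem itself --- the functorial $E_\infty$ (originally $A_\infty$) structure on Lubin-Tate spectra, obtained by applying Goerss-Hopkins obstruction theory to the Lubin-Tate ring and exploiting its regularity --- together with the bookkeeping that makes the obstruction-theoretic gluing uniform over all of $\ModuliGenEll$. The delicate points are at the primes $2$ and $3$, where the supersingular elliptic curves have automorphism groups of order divisible by $p$, so that $\ModuliGenEll$ is genuinely non-schematic there and the global descent must be handled with care, and at the cusp, where the Tate-curve spectrum must be fitted into the same \'etale-local framework. Once the functorial $E_\infty$-structures on the local models and their behavior on overlaps are in hand, assembling the global sheaf is nontrivial but essentially formal. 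We therefore take this theorem as an input, due to Goerss, Hopkins, and Miller, and refer to the literature for the details of its proof.
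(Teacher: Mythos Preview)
The paper does not prove this theorem at all: it is stated as the Goerss--Hopkins--Miller theorem and used as a black box, with no proof or sketch offered. Your proposal goes considerably further, outlining the standard obstruction-theoretic strategy (Landweber exactness, Goerss--Hopkins obstruction theory, the Hopkins--Miller $E_\infty$-structure on Lubin--Tate spectra, chromatic fracture and gluing across the cusp), and then, like the paper, ultimately treats the result as an input from the literature. Your sketch is broadly accurate as a roadmap to the proof, so there is no genuine gap; you simply supply more context than the paper does before arriving at the same conclusion that the theorem should be cited rather than proved here.
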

As a consequence of this result, there is a spectral sequence
$$E_2=H^s(\mathcal{N},f^*\omega^{\otimes k}) \Longrightarrow
\pi_{2k-s}\mathcal{O}_{\tmf}(f)$$ associated to any \'etale map $f\colon
\mathcal{N}\ra \ModuliGenEll$.  The spectral sequence is functorial in $\mathcal{N}$.  If $\mathcal{N}=\Spec(R)$ is an affine
scheme, the spectral sequence collapses at $E_2$ and we have
$\pi_{2k}\mathcal{O}_{\tmf}(f)=\Gamma(\Spec(R),f^*\omega^{\otimes k})$.

Here are some of the basic examples we care about.
\begin{enumerate}
\item Let $\tmf$ denote the $(-1)$-connected cover of
  $\mathcal{O}_{\tmf}(\id\colon \ModuliGenEll\ra \ModuliGenEll)$.  This is
  the \dfn{connective spectrum of topological modular forms}, first
  constructed by Hopkins, 
  Miller, and the first author; some details are given in
  \cite{hopkins-mahowald-from-elliptic-curves}.  

\item Let $\TMF=\mathcal{O}_{\tmf}(\ModuliEll\ra \ModuliGenEll)$, the
  \dfn{periodic spectrum of topological modular forms}; the periodic
  invertible element is $\Delta^{24}\in \pi_{576}\TMF$.
\end{enumerate}

Another family of examples comes from introducing a level 3
structure.  Let $S$ be a scheme over $\Z[\tfrac{1}{3}]$.  If $C\ra S$
is an elliptic curve, let $C[3]$ denote the scheme of ``$3$-torsion
points'' of $C$ (that is, the kernel of $[3]\colon C\ra C$).  Then
$C[3]\ra S$ is an finite \'etale morphism (of degree $9$).  Locally in
the \'etale topology on $S$, $C[3]$ is isomorphic to the discrete
group scheme $\Z/3\times \Z/3$.

\begin{enumerate}
\item A \dfn{$\Gamma(3)$ structure} on $C\ra S$ is a choice of
  isomorphism $\Z/3\times \Z/3\ra C[3]$ of group schemes over $S$.
  Equivalently, a $\Gamma(3)$-structure is a choice of a pair of
  sections $s_1,s_2$ of $C[3]\ra S$ which are (locally in $S$)
  linearly independent.

\item A \dfn{$\Gamma_1(3)$ structure} on $C\ra S$ is a choice of
  monomorphism $\Z/3\ra C[3]$ of group schemes over $S$.
  Equivalently, a $\Gamma_1(3)$-structure is a choice of a (locally
  non-identity) section of   $C[3]\ra S$.

\item A \dfn{$\Gamma_0(3)$ structure} on $C\ra S$ is a choice of
  subgroup scheme $A\subseteq C[3]$ over $S$ which is isomorphic to
  $\Z/3$.  Equivalently, a $\Gamma_0(3)$-structure is a choice of
  equivalence class of $\Gamma_1(3)$-structures, where we identify
  sections which are carried to one another by the inversion
  $[-1]\colon C[3]\ra C[3]$.
\end{enumerate}
Each of these notions of level structure has an associated moduli
stack, and there are finite \'etale morphisms
$$\ModuliEll(\Gamma(3))\ra \ModuliEll(\Gamma_1(3))\ra
\ModuliEll(\Gamma_0(3)) \ra \ModuliEll[\tfrac{1}{3}]$$
of degrees $6$, $2$, and $4$ respectively.  Associated to these are
maps of $E_\infty$-rings $\TMF[\tfrac{1}{3}]\ra \TMF(\Gamma_0(3))\ra
\TMF(\Gamma_1(3))\ra \TMF(\Gamma(3))$.
Thus we obtain a spectrum of topological modular forms
$\TMF(\Gamma(3))$, and
similarly for $\Gamma(3)$ replaced with $\Gamma_1(3)$ and
$\Gamma_0(3)$.  All these spectra  are
$E_\infty$-rings under the commutative ring spectrum $S[\tfrac{1}{3}]$.

\begin{rem}
These notions of level structure admit generalizations to generalized 
elliptic curves over arbitrary schemes $S$ (i.e., without requiring
$3$ to be inverted, and allowing non-smooth curves).  However, the
resulting moduli stacks are not  
\'etale over $\ModuliGenEll$, and they won't play a role in this
paper.   See \cite{katz-mazur-arithmetic-moduli}. 
\end{rem}

Let $G=GL(2,\Z/3)$.  It is clear that this group acts on the set of
$\Gamma(3)$-structures of an elliptic curve, and thus acts on
$\TMF(\Gamma(3))$.  In particular, it is the Galois group of the
finite \'etale cover $\ModuliEll(\Gamma(3))\ra \ModuliEll$, and
thus we have that $\TMF[\frac{1}{3}] \approx \TMF(\Gamma(3))^{hG}$.
Let 
$$G_{\Gamma_1(3)} = \left\{ \begin{pmatrix} 1 & * \\ 0 & *
  \end{pmatrix} \in G \right\} \approx \Z/3\ltimes \Z/2$$ 
and
$$G_{\Gamma_0(3)} = \left\{ \begin{pmatrix} * & * \\ 0 & *
  \end{pmatrix} \in G\right\} \approx \Z/6 \ltimes \Z/2.$$
Then we have $\TMF(\Gamma_1(3))\approx
\TMF(\Gamma(3))^{hG_{\Gamma_1(3)}}$ and $\TMF(\Gamma_0(3))\approx
\TMF(\Gamma(3))^{hG_{\Gamma_0(3)}}$.    Furthermore, since
$G_{\Gamma_1(3)}$ is normal of index $2$ in $G_{\Gamma_0(3)}$, we have 
$\TMF(\Gamma_0(3))\approx \TMF(\Gamma_1(3))^{h\Z/2}$.

A \dfn{Weierstrass curve} over a ring $R$ is the closure
$C_{\underline{a}}= C_{(a_1,\dots,a_6)}$ in 
$\mathbb{P}^2_R$ of an affine curve of the form 
$$y^2+a_1\,xy+a_3\,y = x^3+a_2\,x^2+a_4\,x+a_6,$$
with $a_1,\dots,a_6\in R$.  A Weierstrass curve is smooth if and only
if the discriminant $\Delta=\Delta(a_1,\dots,a_6)$ is invertible in
$R$.  An isomorphism  $C_{\underline{a}'}\ra C_{\underline{a}}$
between Weierstrass curves is an algebraic map given by 
$$x\mapsto \lambda^{-2}x+r,\qquad y\mapsto
\lambda^{-3}y+\lambda^{-1}sx+t$$
which sends $C_{\underline{a}'}$ into $C_{\underline{a}}$.  There is
an Artin stack $\ModuliWeier$ of Weierstrass curves, determined by the
Hopf algebroid implicitly described above.

Every generalized elliptic curve $C\ra S$ admits a presentation,
locally over $S$ in the flat topology, as a
Weierstrass curve, in which the identity element of the elliptic curve
corresponds to the unique point at infinity on the Weierstrass curve.
Thus there is an open embedding
$\ModuliGenEll\ra \ModuliWeier$.  There is a line bundle $\omega$ over
$\ModuliWeier$ generated by invariant
differential
$$\eta_{\underline{a}}=\frac{dx}{2y+a_1\,x+a_3}=
\frac{dy}{3\,x^2+2a_2\,x+a_4-a_1\,y},$$  
and this pulls back to the line bundle $\omega$ over $\ModuliGenEll$.  

In particular, this means that the ring $\mf_*$ of level $1$-modular
is a subring of $A=\Z[a_1,a_2,a_3,a_4,a_6]$, and there are polynomials
$C_4, C_6\in A$ such that for any Weierstrass curve
$C_{\underline{a}}$, $c_i(C_{\underline{a}},\eta)=C(a_1,\dots,a_6)$.
These polynomials are those of \cite{deligne-formulaire}, \cite[p.\
46]{silverman-arith-ell-curves}.

\section{Modular forms of level $3$}

Explicit calculations about level $3$ structures flow from the
following observation.
\begin{prop}\label{prop:order-three-is-flex}
Let $C=C_{\underline{a}}\subset \mathbb{P}^2$ be a smooth Weierstrass
curve over a ring 
$R$ in which $3$ is invertible.  An $R$ point $P$ on $C$ has order $3$
if and only if it is a flex point; that is, if and only if the tangent
line $L$ at $P$  makes a triple intersection with $C$ at $P$.
\end{prop}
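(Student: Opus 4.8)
The plan is to use the group law on a smooth Weierstrass curve $C$, for which the key fact is that three points $P_1, P_2, P_3$ sum to the identity $O$ (the point at infinity) if and only if they are collinear. First I would recall that $O$ is a flex point: the line at infinity $Z = 0$ meets $C$ with multiplicity $3$ at $O$, so $3 \cdot O = O$ in the group law. Now let $P$ be an $R$-point of $C$. The point $P$ has order dividing $3$ exactly when $3P = O$, i.e.\ $P + P + P = O$, which by the collinearity criterion says precisely that $P$, $P$, $P$ are ``collinear with multiplicity,'' meaning there is a line $L$ meeting $C$ with multiplicity $\geq 3$ at $P$. Since any line meeting a plane cubic with multiplicity $\geq 2$ at a smooth point $P$ is the tangent line $L$ at $P$, this is exactly the condition that the tangent line at $P$ has a triple intersection with $C$ at $P$, i.e.\ $P$ is a flex.

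Next I would handle the distinction between ``order dividing $3$'' and ``order exactly $3$.'' The only point of order dividing $3$ that does not have order exactly $3$ is $P = O$ itself. But $O$ is the point at infinity, whereas the proposition concerns an $R$-point $P$ \emph{on the affine curve} $y^2 + a_1 xy + a_3 y = x^3 + a_2 x^2 + a_4 x + a_6$ (that is the implicit meaning of ``a point $P$ on $C$'' in the statement, as opposed to the point at infinity); so $P \neq O$ automatically, and ``order $3$'' and ``$3P = O$'' coincide for such $P$. One should remark that inverting $3$ is needed only to ensure $C[3]$ is \'etale of rank $9$ and, more concretely, that the classification of points of order dividing $3$ behaves well over an arbitrary base $R$ (there are no ``extra'' infinitesimal $3$-torsion phenomena); for the collinearity argument itself one works with the scheme-theoretic intersection $L \cap C$ and the standard fact that its divisor class is $O_{\mathbb{P}^2}(1)$ pulled back, of degree $3$.

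The one point requiring care — and the main obstacle — is making the group-law argument valid over a general ring $R$ rather than over a field. Over a field, ``$3$ collinear points'' is classical; over $R$ one wants: for a section $P$ of the smooth curve $C \to \Spec R$, the divisor $[P] + [P'] + [P'']$ cut out by the tangent line $L$ at $P$ (a well-defined line since $P$ is a section of a smooth curve) equals $3[P]$ in the relative Picard group if and only if $3P = O$. This follows from the fact that for a smooth Weierstrass curve the sheaf $\mathcal{O}_C(O)$ together with the embedding into $\mathbb{P}^2$ realizes the group law via $\mathrm{Pic}^0$, so that $[P] - [O]$, $[P'] - [O]$, $[P''] - [O]$ sum to zero in $\mathrm{Pic}^0(C/R)$ exactly when the three points are the intersection divisor of a line; specializing $P' = P'' = P$ gives the tangent-line (flex) condition and the equation $3([P] - [O]) = 0$, i.e.\ $3P = O$. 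The remaining verifications — that a line through a section $P$ meeting $C$ with multiplicity $\geq 2$ at $P$ must be the tangent line, and that ``triple intersection at $P$'' is equivalent to the intersection divisor being $3[P]$ — are local and routine, handled by choosing a Weierstrass coordinate in which $P$ is, say, $(0, 0)$ after translation and computing the intersection multiplicity directly.
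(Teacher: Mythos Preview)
Your argument is correct and follows exactly the approach of the paper: the group law on a smooth Weierstrass curve identifies collinear triples (with multiplicity) as those summing to $O$, so $3P=O$ is equivalent to the tangent line at $P$ meeting $C$ triply at $P$.  The paper's proof is a one-line citation of this fact, while you have spelled out the care needed over a general base $R$ and the exclusion of $P=O$; these are legitimate points, but the core idea is identical.
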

\begin{proof}
This is a consequence of the description of the group structure on a
smooth Weierstrass curve.  If $L\subset \mathbb{P}^2$ is a line and
$P_1,P_2,P_3$ its three points of intersection with $C$, counted with
multiplicity, then $[P_1]+[P_2]+[P_3]=0$ in the group structure of
$C$.  (See \cite{silverman-arith-ell-curves} or
\cite{katz-mazur-arithmetic-moduli}.)  
\end{proof}

We give a concrete description of $\ModuliEll(\Gamma_1(3))$.
\begin{prop}\label{prop:gamma-one-three-scheme}
Let $R$ be in which $3$ is invertible.  Let $(C,\eta,P)$ be a smooth
elliptic curve $C$ over $R$ together with an invariant $1$-form $\eta$
on $C$ and a $\Gamma_1(3)$-structure $P\in C(R)$.  Then there is a
unique triple 
$(C_1,\eta_1,P_1)$ 
and a unique isomorphism $(C_1,\eta_1,P_1)\approx (C,\eta,P)$ of this
data such that 
$$C_1: \quad y^2+A_1\,xy+A_3\,y=x^3, \qquad
\eta_1=\frac{dx}{2\,y+A_1\,x+A_3}=\frac{dy}{3\,x^2-A_1\,y}$$  
with $\Delta=A_3^3(A_1^3-27A_3)$ invertible in $R$, and such that $P_1$
is the point with $xy$-coordinates $(0,0)$.

In particular, the moduli problem of triples $(C,\eta,P)$ as above is
represented by the affine scheme $\Z[\tfrac{1}{3},A_1,A_3,\Delta^{-1}]$.
\end{prop}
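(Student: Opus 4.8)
The plan is to rigidify $(C,\eta,P)$ to a normal Weierstrass equation in which $P$ sits at a prescribed affine point, and then to observe that every step of the normalization is forced, so that the resulting coefficients $(A_1,A_3)$ constitute a complete, functorial invariant. The starting point is the standard theory of the Weierstrass equation over a base: since $\Spec R$ is affine, the choice of the nowhere-vanishing invariant differential $\eta$ (a trivialization of $\omega(C)$) allows one to run the usual construction \emph{globally} over $R$ --- the sheaves $\pi_*\mathcal{O}_C(nO)$ are free, with canonical elements $x\in\pi_*\mathcal{O}_C(2O)$ and $y\in\pi_*\mathcal{O}_C(3O)$ whose leading terms at the identity section $O$ are $\eta^{-2}$ and $\eta^{-3}$, the relevant extensions of sheaves splitting because $H^1$ of a quasi-coherent sheaf on the affine scheme $\Spec R$ vanishes. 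This exhibits $(C,\eta)$ as a Weierstrass curve $C_{\underline a}$ with $\eta=\eta_{\underline a}$, and any two such $\eta$-rigidified presentations differ by a \emph{unique} coordinate change $x\mapsto x+r$, $y\mapsto y+sx+t$ with $r,s,t\in R$ (these are exactly the Weierstrass transformations with $\lambda=1$). I would simply cite this (e.g.\ \cite{deligne-formulaire} or \cite{katz-mazur-arithmetic-moduli}).

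Now I normalize. Since $P$ is a $\Gamma_1(3)$-structure it is of exact order $3$ on each geometric fibre, hence never equal to $O$, so in the affine chart it is an $R$-point $(x_0,y_0)$; the transformations above act on affine points by $(x',y')\mapsto(x'+r,\,y'+sx'+t)$, so there is a unique $(r,t)=(x_0,y_0)$ carrying $(0,0)$ to $P$, while $s$ remains free. After this substitution $a_6=0$ (because $P=(0,0)$ lies on $C$), and the tangent line of $C$ at $P$ is $a_3y-a_4x=0$. If $a_3$ were contained in some prime of $R$, then over that residue field this tangent would pass through $O$ (it would be the vertical line, as $a_4$ must then be nonzero by smoothness), so it would meet $C$ at $O\ne P$, contradicting Proposition~\ref{prop:order-three-is-flex}; hence $a_3\in R^\times$. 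The remaining parameter acts by $a_3\mapsto a_3$, $a_4\mapsto a_4-a_3s$, so the unique choice $s=a_4/a_3$ brings us to $y^2+a_1xy+a_3y=x^3+a_2x^2$ with $a_3\in R^\times$ and still $P=(0,0)$. Now the tangent at $P$ is $\{y=0\}$, which does not pass through $O$, so its intersection with $C$ is cut out by $x^2(x+a_2)$; by Proposition~\ref{prop:order-three-is-flex} this divisor must be $3[P]$, which forces $a_2=0$. Writing $A_1=a_1$, $A_3=a_3$, we have reached $C_1\colon y^2+A_1xy+A_3y=x^3$, $\eta_1=\eta_{\underline a}$, $P_1=(0,0)$; the $b$-invariants are $b_2=A_1^2$, $b_4=A_1A_3$, $b_6=A_3^2$, $b_8=0$, giving $\Delta=A_3^3(A_1^3-27A_3)$, which is a unit since $C$ is smooth.

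Every choice above was forced --- the $\eta$-rigidified presentation up to the $(r,s,t)$-group, then $(r,t)$ by ``$P\mapsto(0,0)$'', then $s$ by ``$a_4=0$'', and then $a_2=a_6=0$ automatically --- so the normal form $(C_1,\eta_1,P_1)$ and the isomorphism to $(C,\eta,P)$ are unique. In particular $(C,\eta,P)$ has no nontrivial automorphisms, so the moduli problem is a set-valued functor; it is a sheaf for the flat topology because curves, invariant differentials and sections descend. The construction commutes with base change, so it yields a natural bijection between isomorphism classes of $(C,\eta,P)$ over $R$ and pairs $(A_1,A_3)\in R^2$ with $A_3^3(A_1^3-27A_3)\in R^\times$, that is, with $\Hom(\Z[\tfrac13,A_1,A_3,\Delta^{-1}],R)$; conversely each such pair gives an admissible triple ($\Delta\in R^\times$ yields smoothness, and $P_1=(0,0)$ is a flex, hence of order $3$ by Proposition~\ref{prop:order-three-is-flex}). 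Hence the moduli problem is represented by $\Spec\Z[\tfrac13,A_1,A_3,\Delta^{-1}]$.

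The main obstacle is not the bookkeeping with Weierstrass coordinate changes, which is routine, but rather: (i) knowing that the entire normalization can be carried out over $R$ itself and not merely \'etale-locally --- this is exactly where the affineness of $\Spec R$ and the rigidification by $\eta$ are needed --- and (ii) deploying Proposition~\ref{prop:order-three-is-flex} in families, where one must argue fibrewise that the tangent at $P$ avoids $O$ (to obtain $a_3\in R^\times$) and that ``triple contact at $P$'' becomes the honest polynomial condition $a_2=0$ in the normalized coordinates.
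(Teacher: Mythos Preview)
Your proof is correct and follows essentially the same approach as the paper's: both move $P$ to the origin, use Proposition~\ref{prop:order-three-is-flex} to see that the tangent at $P$ is not vertical (hence $a_3\in R^\times$), shear to make the tangent the $x$-axis, and then read off $a_2=0$ from the triple-contact condition. The only differences are cosmetic --- you rigidify $\eta$ at the outset (so $\lambda=1$ throughout and only the $(r,s,t)$-freedom remains), whereas the paper scales at the end, and you are more explicit about why the Weierstrass presentation exists globally over the affine base and about the representability consequence.
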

\begin{proof}
Note that on the Weierstrass curve $C_1$, the tangent line to $P_1$ is
precisely the $x$-axis (so that $P_1$ is indeed a point of order $3$ by
\eqref{prop:order-three-is-flex}).  The result amounts to showing that
given $(C,\eta,P)$ where $C$ is a smooth Weierstrass curve, 
there is a \emph{unique} Weierstrass transformation sending $P$ to the
origin and sending the tangent line at $P$ to the $x$-axis.    This is
easiest to see in three steps.  Given a pair $(C,P)$ where $C$ is a
Weierstrass curve and $P=(\alpha,\beta)$ a point of exact order $3$,
use the variable substitution $x'=x+\alpha, y'=y+\beta$ to get a
Weierstrass curve $C'$ of the form
$$y^2+A_1\,xy+A_3\,y=x^3+A_2\,x^2+A_4\,x,$$
so that $P\in C$ corresponds with $P'=(0,0)\in C'$.  The tangent line
$L$ at $P'$ is given by $A_3y=A_4 x$, and since $L$ cannot be vertical
(every vertical line in the $xy$-plane intersects the curve at the
infinity), we have that $A_3$ is invertible.  Thus the transformation
$x''=x', y''=y'+(A_4/A_3)x'$ gives rise to a new Weierstrass curve
$C''$, which must have the desired form, since $P''=(0,0)$ must be a
triple intersection point of $C''$ with the $x$-axis.  Finally use a
transformation of the form $x'''=\lambda^{-2}x'',
y'''=\lambda^{-3}y''$ to get the invariant differential in the right form.
\end{proof}

\begin{cor}
We have 
$$\pi_*\TMF(\Gamma_1(3)) \approx H^0(\ModuliEll, \omega^{\otimes *})
\approx \Z[\tfrac{1}{3},a_1,a_3,\Delta^{-1}],$$
where $\len{a_1}=2$ and $\len{a_3}=6$.
\end{cor}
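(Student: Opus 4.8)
The plan is to identify $\pi_*\TMF(\Gamma_1(3))$ with global sections of powers of $\omega$ over the moduli stack $\ModuliEll(\Gamma_1(3))$, and then to use Proposition~\ref{prop:gamma-one-three-scheme} to show that this stack is the affine scheme $\Spec \Z[\tfrac{1}{3},A_1,A_3,\Delta^{-1}]$. First I would recall that the étale map $f\colon \ModuliEll(\Gamma_1(3))\ra \ModuliGenEll$ gives, by the Goerss--Hopkins--Miller theorem and the descent spectral sequence quoted after the statement of that theorem, a spectral sequence $E_2 = H^s(\ModuliEll(\Gamma_1(3)), f^*\omega^{\otimes k}) \Rightarrow \pi_{2k-s}\TMF(\Gamma_1(3))$. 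The key point is then that $\ModuliEll(\Gamma_1(3))$ is representable by an affine scheme: Proposition~\ref{prop:gamma-one-three-scheme} exhibits a canonical Weierstrass form $y^2 + A_1 xy + A_3 y = x^3$ for every triple $(C,\eta,P)$, with no remaining automorphisms, so the moduli problem of such triples — which is exactly $\ModuliEll(\Gamma_1(3))$ after we remember the chosen $\eta$ as the grading datum — is the affine scheme $\Spec R$ with $R = \Z[\tfrac{1}{3}, A_1, A_3, \Delta^{-1}]$, $\Delta = A_3^3(A_1^3 - 27 A_3)$.

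Next I would invoke the collapse statement from the excerpt: when $\mathcal{N} = \Spec(R)$ is affine, the spectral sequence collapses at $E_2$, so $H^s$ vanishes for $s>0$ and $\pi_{2k}\TMF(\Gamma_1(3)) = \Gamma(\Spec R, f^*\omega^{\otimes k})$ while odd homotopy vanishes. It remains to compute these global sections of $f^*\omega^{\otimes k}$. Here I would use that $\omega$ is generated over $\ModuliWeier$ by the invariant differential $\eta_{\underline{a}}$ (recalled in \S2), which pulls back along the canonical presentation of Proposition~\ref{prop:gamma-one-three-scheme} to a generator $\eta_1$ of $f^*\omega$ over $\Spec R$; hence $f^*\omega$ is a free rank-one $R$-module and $\Gamma(\Spec R, f^*\omega^{\otimes k}) \cong R$ for every $k$. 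Assembling over all $k$, $\pi_*\TMF(\Gamma_1(3)) \cong R$ as graded rings, where the grading is dictated by the weight: $A_1$ scales like $\lambda^{-1}$ under $\eta\mapsto\lambda\eta$ and $A_3$ like $\lambda^{-3}$ (one sees this from the substitution $x'''=\lambda^{-2}x'', y'''=\lambda^{-3}y''$ in the proof of Proposition~\ref{prop:gamma-one-three-scheme}, which forces $A_1 \mapsto \lambda A_1$, $A_3 \mapsto \lambda^3 A_3$), so that $\len{A_1}=2$ and $\len{A_3}=6$ in the topological grading where $\pi_2$ corresponds to weight $1$.

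Finally I would note the first displayed isomorphism $\pi_*\TMF(\Gamma_1(3)) \approx H^0(\ModuliEll,\omega^{\otimes *})$ is just the affineness statement together with collapse — $H^0$ over $\ModuliEll(\Gamma_1(3))$, abbreviated $\ModuliEll$ by the mild abuse in the corollary — so there is really nothing further to prove once affineness is in hand. The main obstacle is the representability/affineness step: one must be careful that Proposition~\ref{prop:gamma-one-three-scheme} gives not merely that every triple $(C,\eta,P)$ over a ring $R$ arises from a \emph{unique} $(A_1,A_3)$ with $\Delta$ invertible, but that there are \emph{no nontrivial automorphisms} of the normalized data, which is what makes the moduli problem rigid (a scheme rather than a stack). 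That rigidity is exactly the uniqueness of the Weierstrass transformation asserted in that proposition, so modulo invoking it the corollary is essentially formal; the remaining points — freeness of $f^*\omega$ and bookkeeping of the weight grading — are routine.
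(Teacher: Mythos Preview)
There is a genuine gap. Your central claim---that $\ModuliEll(\Gamma_1(3))$ is an affine scheme---is false. The uniqueness in Proposition~\ref{prop:gamma-one-three-scheme} concerns \emph{triples} $(C,\eta,P)$, and indeed the moduli of such triples is the affine scheme $\Spec A$ with $A=\Z[\tfrac13,a_1,a_3,\Delta^{-1}]$. But $\ModuliEll(\Gamma_1(3))$ classifies \emph{pairs} $(C,P)$, and passing from triples to pairs means quotienting by the $\mathbb{G}_m$ that rescales $\eta$; that is, $\ModuliEll(\Gamma_1(3))\simeq [\Spec A/\mathbb{G}_m]$. This quotient has nontrivial inertia: at $a_1=0$ (the curve $y^2+a_3y=x^3$ with $P=(0,0)$) the stabilizer is $\mu_3$, realized by the automorphism $(x,y)\mapsto(\zeta x,y)$, which fixes $P$ but scales $\eta$ by $\zeta^{-1}$. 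So the rigidity you extract from Proposition~\ref{prop:gamma-one-three-scheme} does \emph{not} rule out automorphisms of $(C,P)$, only of $(C,\eta,P)$, and the collapse statement for affine $\Spec(R)$ does not apply to $\ModuliEll(\Gamma_1(3))$ as written.

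The paper's proof avoids this by working directly with the Hopf algebroid presentation: by Proposition~\ref{prop:gamma-one-three-scheme}, $H^*(\ModuliEll(\Gamma_1(3)),\omega^{\otimes *})$ is the cohomology of the Hopf algebroid $(A,\Gamma)$ with $\Gamma=A[\lambda,\lambda^{-1}]$, which is exactly the groupoid encoding the $\mathbb{G}_m$-action on $\Spec A$. The vanishing of $H^{>0}$ then comes from linear reductivity of $\mathbb{G}_m$ (every comodule splits into weight pieces), and $H^0$ with coefficients in $\omega^{\otimes k}$ is the weight-$k$ summand of $A$. Your bookkeeping for the grading and for the triviality of $f^*\omega$ is fine; what is missing is this Hopf-algebroid (equivalently, $\mathbb{G}_m$-cohomology) step in place of the incorrect affineness claim.
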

\begin{proof}
By \eqref{prop:gamma-one-three-scheme}, $H^*(\ModuliEll(\Gamma_1(3)),
\omega^{\otimes *})$ 
is equal to the cohomology of the Hopf algebroid with
$A=\Z[\tfrac{1}{3},a_1,a_3,\Delta^{-1}]$ and
$\Gamma=A[\lambda,\lambda^{-1}]$.  
\end{proof}

Note that for a curve of the form $y^2+a_1\,xy+a_3\,y=x^3$, the
inversion map is given by $[-1](x,y)=(x,-y-a_1\,x-a_3)$.  In
particular, the point $-P$ has $xy$-coordinates $(0,-a_3)$.

\begin{prop}\label{prop:gamma-0-3-ss-setup}
We have $H^s(\ModuliEll(\Gamma_0(3)),\omega^{\otimes k}) \approx
H^s(\Z/2, \Z[\tfrac{1}{3},a_1,a_3,\Delta^{-1}])$, where the generator
of $\sigma$ of $\Z/2$ acts by $\sigma(a_1)=-a_1$ and
$\sigma(a_3)=-a_3$.
In particular, there is a spectral sequence $E_r(\TMF(\Gamma_0(3)))$, with
$$E_2^{s,t}(\TMF(\Gamma_0(3))) \approx H^s(\Z/2,
\pi_t\TMF(\Gamma_1(3)))\Longrightarrow 
\pi_{t-s}\TMF(\Gamma_0(3)).$$ 
\end{prop}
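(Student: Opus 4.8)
The plan is to derive the statement from the identification $\TMF(\Gamma_0(3)) \approx \TMF(\Gamma_1(3))^{h\Z/2}$ established in the previous section, together with the computation of $\pi_*\TMF(\Gamma_1(3))$ in the corollary to \eqref{prop:gamma-one-three-scheme}. The cohomology statement $H^s(\ModuliEll(\Gamma_0(3)),\omega^{\otimes k}) \approx H^s(\Z/2, \Z[\tfrac{1}{3},a_1,a_3,\Delta^{-1}])$ and the spectral sequence statement are really two faces of the same fact, so I would prove the former first and then feed it into a homotopy fixed point spectral sequence.

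First I would recall that $G_{\Gamma_1(3)}$ is normal of index $2$ in $G_{\Gamma_0(3)}$, so the covering $\ModuliEll(\Gamma_1(3)) \to \ModuliEll(\Gamma_0(3))$ is a $\Z/2$-Galois cover, with the nontrivial element $\sigma$ acting on a $\Gamma_1(3)$-structure $P$ by $P \mapsto -P$ (inversion), exactly as in the definition of a $\Gamma_0(3)$-structure as an equivalence class of $\Gamma_1(3)$-structures modulo $[-1]$. Since this cover is finite étale of degree $2$, Galois descent (equivalently, the Cartan--Leray spectral sequence for the cover, which degenerates here because $2$ is invertible only up to the usual caveat — actually it does \emph{not} degenerate, which is the whole point) gives $H^s(\ModuliEll(\Gamma_0(3)),\omega^{\otimes k}) \approx H^s(\Z/2, H^0(\ModuliEll(\Gamma_1(3)),\omega^{\otimes k}))$; here I use that $\ModuliEll(\Gamma_1(3))$ is affine — represented by $\Spec \Z[\tfrac13,A_1,A_3,\Delta^{-1}]$ by \eqref{prop:gamma-one-three-scheme} — so its higher $\omega^{\otimes *}$-cohomology vanishes and only the $H^0$ coefficient module survives. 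Next I would pin down the $\Z/2$-action on $\Z[\tfrac13,a_1,a_3,\Delta^{-1}]$: using the explicit formula $[-1](x,y) = (x,-y-a_1 x - a_3)$ for the curve $y^2+a_1 xy + a_3 y = x^3$, the point $-P$ has coordinates $(0,-a_3)$, and applying the uniqueness clause of \eqref{prop:gamma-one-three-scheme} to renormalize $(C,\eta,-P)$ back to standard form (translate so that $-P$ goes to the origin, then rescale to fix the differential) one reads off that the new coefficients are $(-a_1,-a_3)$; hence $\sigma(a_1)=-a_1$, $\sigma(a_3)=-a_3$, as claimed.

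For the spectral sequence, I would invoke the homotopy fixed point spectral sequence for the $\Z/2$-action on $\TMF(\Gamma_1(3))$, whose $E_2$-term is $H^s(\Z/2, \pi_t\TMF(\Gamma_1(3)))$ converging to $\pi_{t-s}\TMF(\Gamma_1(3))^{h\Z/2} = \pi_{t-s}\TMF(\Gamma_0(3))$. Since $\pi_t\TMF(\Gamma_1(3)) \approx \Z[\tfrac13,a_1,a_3,\Delta^{-1}]$ in degree $t$ (concentrated in even degrees, with $|a_1|=2$, $|a_3|=6$), and the $\Z/2$-action on homotopy agrees with the geometric $\sigma$-action just computed — this compatibility is exactly the functoriality of the descent spectral sequence of the Goerss--Hopkins--Miller theorem, i.e. the map of spectral sequences induced by $\ModuliEll(\Gamma_1(3)) \to \ModuliEll(\Gamma_0(3))$ — the two descriptions of $E_2$ coincide. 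I expect the main obstacle to be the bookkeeping in the renormalization step: verifying that there is no sign or scaling subtlety when one applies the uniqueness in \eqref{prop:gamma-one-three-scheme} to $(C_1,\eta_1,-P_1)$, in particular checking that the required Weierstrass transformation is $x \mapsto x$, $y \mapsto -y - a_1 x - a_3$ followed by $\lambda = -1$, so that $(A_1,A_3) \mapsto (-a_1,-a_3)$ on the nose rather than up to a unit. Everything else — affineness of $\ModuliEll(\Gamma_1(3))$, normality of $G_{\Gamma_1(3)}$, and the existence of the HFPSS — is either already in the excerpt or standard.
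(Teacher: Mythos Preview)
Your proposal is correct and matches the paper's (implicit) argument: the paper states this proposition without proof, relying on the sentence immediately preceding it about $[-1](x,y)=(x,-y-a_1x-a_3)$ and $-P=(0,-a_3)$ together with the identification $\TMF(\Gamma_0(3))\approx \TMF(\Gamma_1(3))^{h\Z/2}$ from \S2, and you have filled in exactly these details. One small clarification on the renormalization: the composite of inversion followed by the $\lambda=-1$ rescale is the single Weierstrass transformation $x\mapsto x$, $y\mapsto y+a_1x+a_3$, which already preserves $\eta_1$ and sends $(a_1,a_3)\mapsto(-a_1,-a_3)$, so no sign subtlety arises.
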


Recall that the spectral sequence of \eqref{prop:gamma-0-3-ss-setup}
is that computing homotopy groups of the homotopy fixed point spectrum
$\TMF(\Gamma(3))^{hG_{\Gamma_0(3)}}$.  Thus there is a comparison map
$E_r(\TMF)\ra E_r(\TMF(\Gamma_0(3)))$ with the spectral sequence for
$\TMF=\TMF^{hG}$.  Associated to the 
natural map of commutative ring spectra $f^*\colon \TMF\ra
\TMF(\Gamma_0(3))$ there is a 
transfer map $f_!\colon \TMF(\Gamma_0(3))\ra \TMF$, which is a map of
$\TMF$-module spectra.  We note that
there is a map of spectral sequences $f_!\colon
E_r(\TMF(\Gamma_0(3)))\ra E_r(\TMF)$, and that on $E_2$-terms, this
map is precisely the cohomology transfer.  In particular, we have
\begin{prop}\label{prop:transfer-argument}
The composite $\TMF\xra{f^*} \TMF(\Gamma_0(3)) \xra{f_!} \TMF$ is
equal to $4\id$.
\end{prop}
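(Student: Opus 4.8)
The plan is to reduce the statement to a well-known property of transfers for finite covers, applied to the degree-$4$ étale cover $\ModuliEll(\Gamma_0(3)) \ra \ModuliEll[\tfrac{1}{3}]$. The basic principle is that for any finite covering map, the composite (pullback)$\circ$(transfer) — or here (transfer)$\circ$(pullback) — is multiplication by the degree of the cover. Concretely, I would first recall that $\TMF(\Gamma_0(3)) = \mathcal{O}_{\tmf}(f)$ for $f\colon \ModuliEll(\Gamma_0(3))\ra \ModuliGenEll$ the indicated degree-$4$ étale morphism, and that $f^*$ and the transfer $f_!$ are the maps of $\TMF$-module spectra induced functorially by $f$ through the sheaf $\mathcal{O}_{\tmf}$ of $E_\infty$-ring spectra.

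Next I would identify the composite $f_! f^*$ at the level of homotopy fixed points, using the identification $\TMF(\Gamma_0(3)) \approx \TMF(\Gamma(3))^{hG_{\Gamma_0(3)}}$ and $\TMF \approx \TMF(\Gamma(3))^{hG}$, where $G = GL(2,\Z/3)$ has order $48$ and $G_{\Gamma_0(3)}$ has order $12$, so the index is indeed $4$. Here $f^*$ is the restriction map on homotopy fixed points along $G_{\Gamma_0(3)} \hookrightarrow G$, and $f_!$ is the corresponding transfer (a stable transfer map along the finite-index inclusion of subgroups, built from the $G$-action on $\TMF(\Gamma(3))$). The key algebraic input is then the standard double-coset formula: for $H \le G$ of finite index, the composite $\mathrm{res}^G_H$ followed by $\mathrm{tr}^G_H$ on $X^{hG}$ is homotopic to $\sum_{g \in H\backslash G/H}(\text{a term indexed by the double coset})$, but the composite in the other order, $\mathrm{tr}^G_H \circ \mathrm{res}^G_H$ on $X^{hG}$, is simply multiplication by $[G:H]$ — this is the analogue of the fact that for a $G$-set which is a single free $G/H$-orbit, the self-map of $X^{hG}$ induced by the transfer-then-restrict is $[G:H]\cdot\id$.

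The cleanest way to make this precise, and the step I expect to require the most care, is to verify the claim on the level of the descent spectral sequences and then lift it. On $E_2$-terms, $f^*$ and $f_!$ are the cohomology restriction and transfer for $\Z/2$ (via \eqref{prop:gamma-0-3-ss-setup}) composed with the comparison to the spectral sequence for $\TMF$; but more to the point, on $E_2$ the composite $f_! f^*$ is the cohomology transfer-restriction for the degree-$4$ cover, which on $H^*(\ModuliEll[\tfrac13],\omega^{\otimes *})$ is multiplication by $4$ by the usual projection formula $f_!(f^* x) = f_!(1)\cdot x = \chi(f)\cdot x$, where $f_!(1)$ is the Euler characteristic class of the étale cover, equal to its degree $4$ since the cover is étale (hence "unramified", so the Euler class is locally constant equal to the fiber cardinality). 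Since multiplication by $4$ is a map of spectral sequences that is an isomorphism onto $4\cdot E_r$ at every stage and both the source and target spectral sequences converge to $\pi_*\TMF$, it remains only to observe that $f_! f^*$ is a map of $\TMF$-module spectra whose underlying $E_2$-map is $4\id$; because $\pi_*\TMF$ is $4$-torsion-free in the relevant range and the module structure pins down the map from its effect on $1 \in \pi_0$, we conclude $f_! f^* = 4\id$ as self-maps of $\TMF$.

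The main obstacle, then, is not the spectral-sequence bookkeeping but making rigorous the assertion that $f_!(1) = 4$ in $\pi_0\TMF \cong \Z[\tfrac13,\dots]$ (up to completion/localization issues) — i.e., computing the transfer of the unit for an étale cover of stacks. I would handle this either by working étale-locally, where the cover splits as a disjoint union of $4$ copies of the base and the transfer of $1$ visibly becomes $4$, and then invoking functoriality and gluing of the sheaf $\mathcal{O}_{\tmf}$; or, equivalently, by the homotopy-fixed-point description above, where $f_!(1)$ is the image of $1$ under $\mathrm{tr}^G_{G_{\Gamma_0(3)}}\colon \TMF(\Gamma(3))^{hG_{\Gamma_0(3)}} \ra \TMF(\Gamma(3))^{hG}$ applied after $f^*(1)=1$, and the composite sends $1 \mapsto [G:G_{\Gamma_0(3)}] = 4$ by the elementary computation of the transfer on the unit for a finite group action. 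Since $4\id$ is what we want, and this identification is natural and survives passage to $\pi_0$, the proposition follows.
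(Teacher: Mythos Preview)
Your proposal is essentially correct and follows the same strategy as the paper: reduce to a self-map of $\TMF$-modules, hence to the image of $1\in\pi_0\TMF$, and identify the composite on group cohomology with multiplication by the index $[G:G_{\Gamma_0(3)}]=4$. The paper's proof is three sentences long and does exactly this.

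One point worth tightening: your passage from ``$4\id$ on $E_2$'' to ``$4\id$ on $\pi_0$'' via the remark that $\pi_*\TMF$ is $4$-torsion-free is not the right justification. Torsion-freeness does not by itself rule out a filtration jump; what you need is that $\pi_0\TMF$ has no contribution from positive filtration in the descent spectral sequence. The paper handles this cleanly by invoking $E_\infty^{s,s}(\TMF)=0$ for $s>0$, so the image of $1$ in $\pi_0$ is exactly what you see on $E_2^{0,0}$. Your final paragraph's alternative---computing $f_!(1)=4$ directly from the spectrum-level identity $\mathrm{tr}^G_H\circ\mathrm{res}^G_H=[G:H]\cdot\id$ on homotopy fixed points---is also valid and in fact bypasses the spectral sequence entirely, but you should commit to one route rather than hedging. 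The double-coset formula and the Euler-class language are unnecessary here and can be dropped.
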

\begin{proof}
Since both are maps of $\TMF$ modules, we only have to check the image
of $1\in \pi_0\TMF$.  It is a straightforward fact about group
cohomology that the composite $H^*(G,\pi_*\TMF(\Gamma(3))\ra
H^*(G_{\Gamma_0(3)},\pi_*\TMF(\Gamma(3)))\ra
H^*(G,\pi_*\TMF(\Gamma(3)))$ is given by multiplication by $4$.  The
result follows from the fact that $E_\infty^{s,s}(\TMF)=0$ for $s>0$.
\end{proof}

\section{Topological modular forms of level $3$}
\label{sec:homtopy-tmf-0-3}

The $E_2$-term can be described in terms of the bigraded ring
$R^{s,t}= \Z[\tfrac{1}{3},a_1,a_3,\zeta,\Delta^{-1}]/(2\zeta)$, where
$\zeta$ is given bidegree $(1,0)$.  If we assign ``odd'' weight to
$a_1$, $a_3$, and $\zeta$, then $E_2=H^s(\Z/2, \pi_t\TMF(\Gamma_1(3)))$
can be identified with the even part of $R^{s,t}$.

Let $x=\zeta a_3^3 \in E_2^{1,18}$.  There is an isomorphism
$$E_2\approx \Z[\tfrac{1}{3},a_1^2,a_1a_3,a_3^2,\Delta^{-1}][x]/(2x).$$
We write 
$$h_1= \zeta a_1 = x\, a_1a_3\, a_3^{-4}, \qquad h_2=\zeta^3
a_3=x^3\,a_3^{-8}, \qquad h_{2,0}=\zeta a_3 = x\, a_3^{-2}.$$  
We have the following $d_3$
differentials:
$$d_3\colon a_1^2\mapsto h_1^3,\qquad a_3^2\mapsto h_1h_{2,0}^2,\qquad
a_1a_3\mapsto 0,\qquad h_1\mapsto 0.$$
In general, if $c\in R^{0,t}$ has odd weight, then $d_3(c^2)=h_1
(\zeta c)^2$; this can be proved by a cup-$1$ construction.
We can
identify $h_1$ as the class representing the  
image of $\eta\in \pi_1S^0$.  We will show in
\eqref{cor:a1a3-is-permanent-cycle} that $a_1a_3$ is a permanent cycle.

This forces:
$$d_3\colon h_{2,0}\mapsto h_1h_{2,0}\zeta^2,\qquad x\mapsto 0.$$
(The differentials already computed show that $d_3\colon E_3^{4,20}\ra
E_3^{7,22}$ is injective, so $d_3(x)=0$, and $h_{2,0}=xa_3^{-2}$.) 

We note that the transfer argument of \eqref{prop:transfer-argument}
shows that whenever there is an element $\alpha$ of order $8$ in
$\pi_*\TMF$, its image $\alpha'$ in $\pi_*\TMF(\Gamma_0(3))$ is
non-trivial.  Furthermore, if $\alpha$ and $4\alpha$ are detected on the 
$s_0$-line and $s_1$-line of
$E_\infty(\TMF)$, respectively, then $\alpha'$ must be detected on the
$s$-line of $E_\infty(\TMF(\Gamma_0(3)))$, where $s_0\leq s\leq s_1$.
This allows us to see that 
$$h_2=\zeta^3 a_3 \in E_2^{3,6}\qquad\text{detects}\qquad \nu \in
\pi_3\TMF(\Gamma_0(3)),$$ 
and
$$h_{2,0}^4=\zeta^4a_3^4 \in E_2^{4,24}\quad\text{detects}\quad
\bar{\kappa}\in \pi_{20}\TMF(\Gamma_0(3)),$$ 
where these elements are images of the like-named classes in $\pi_*S^0$.

At this point there are no possible differentials until the
$E_7$-term.  There is a map $E_7\ra \F_2[\Delta,\Delta^{-1},x]$ which
is surjective, and is an isomorphism on lines $s\geq3$.

The element $h_{2,0}^4=x^4\Delta^{-1}$ is the image of the class
representing $\bar{\kappa}$ in the spectral sequence for $\pi_*\TMF$, so it
is a permanent cycle.  There is a relation $\bar{\kappa}^6=0$ in
$\pi_*\TMF$, and hence this relation must hold in
$\pi_*\TMF(\Gamma_0(3))$.  The only possible differential which can do
this is is $d_7(x^{17}\Delta^{-7})=(h_{2,0}^4)^6$.   

This implies that $d_7$ is non-trivial on either $x$ or on $\Delta^4$
(but not both).  In either case, one sees that $E_7=E_\infty$, and
that this vanishes for lines $s\geq7$.  

Thus, we see that $x$ is a permanent cycle, whence
$d_7(\Delta)=h_{2,0}^4\nu$.    
\begin{prop}
The above provides a description of $\pi_*\TMF(\Gamma_0(3))$.  In
particular, there is an exact sequence
  \begin{multline*}
    0\ra \F_2[\Delta^{\pm 2}]\{\nu, \nu^2, x, \eta x,\bar{\kappa},
    x^2, \nu 
    x^2\} \ra \pi_*\TMF(\Gamma_0(3)) 
\\
    \ra \bo_*[\tfrac{1}{3},\Delta^{\pm 1}]\{1,a_1a_3\} \oplus
    \bsp_*[\tfrac{1}{3},\Delta^{\pm1}]\{2a_3^2,2(a_1a_3)a_3^2\} \ra
    \Delta\,\F_2[\Delta^{\pm 2}]\ra 0.
  \end{multline*}
The spectrum $\TMF(\Gamma_0(3))$ is $48$-periodic, with periodicity
generated by $\Delta^2$.  The elements $\eta$, $\nu$, and
$\bar{\kappa}$ are images of the like-named elements in the $1$, $3$
and $20$ stems of $S^0$.  The element $x$ lies in
$\pi_{17}\TMF(\Gamma_0(3))$.  Furthermore, this sequence encodes the
multiplicative structure, except that in addition we must note that
$a_1^4x=\eta(a_1a_3)^3$, $a_1a_3\, x= \eta\Delta$, $a_1a_3\,x^2=
\eta^2(a_1a_3)a_3^4$, 
$\nu
x=\bar{\kappa}$, and that 
$x^3=\nu\Delta^2$, $x^4=\bar{\kappa}\Delta^2$, $x^5 =\nu x \Delta^2$,
$x^6=\nu^2\Delta^4$, and $x^7=0$.
\end{prop}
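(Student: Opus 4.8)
The plan is to read off the $E_\infty$-page from the differentials already established and then resolve the extension problems. The only nonzero differentials are $d_3$ — a derivation whose values on the algebra generators $a_1^2,a_1a_3,a_3^2,h_{2,0},x$ are recorded above, together with the rule $d_3(c^2)=h_1(\zeta c)^2$ for odd-weight $c$ — and $d_7$, with $d_7(\Delta)=h_{2,0}^4\nu$ and $x$ a permanent cycle. Thus the computation breaks into three parts: (i) the $d_3$-cohomology of $E_2\approx\Z[\tfrac13,a_1^2,a_1a_3,a_3^2,\Delta^{-1}][x]/(2x)$; (ii) the single $d_7$; (iii) the additive and multiplicative extensions.

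For (i) I would organize $E_2$ by powers of $x$ and by cohomological filtration $s$. The subring $\Z[\tfrac13,a_1a_3,\Delta^{-1}][x]/(2x)$ lies in the kernel of $d_3$, while $d_3(a_1^2)$ and $d_3(a_3^2)$ are nonzero classes in the $\zeta$-tower; a bookkeeping of which monomials survive then shows that the $s=0$ line of $E_4$ is the $\Z[\tfrac13,\Delta^{\pm1}]$-submodule of $\Z[\tfrac13,a_1^2,a_1a_3,a_3^2,\Delta^{-1}]$ generated by $1$, $a_1a_3$, $2a_3^2$, $2(a_1a_3)a_3^2$ — the factors of $2$ forced because $d_3(a_3^2)=h_1h_{2,0}^2\neq0$ while $d_3(2a_3^2)=0$ — and that the positive-filtration part is a sum of $\zeta$-power towers truncated by the $d_3$'s. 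One then recognizes the resulting chart, exactly as in the descent spectral sequence computing $KO=KU^{h\Z/2}$, as the homotopy patterns of $\bo$ and $\bsp$ attached respectively to $\{1,a_1a_3\}$ and $\{2a_3^2,2(a_1a_3)a_3^2\}$; this produces the middle term $\bo_*[\tfrac13,\Delta^{\pm1}]\{1,a_1a_3\}\oplus\bsp_*[\tfrac13,\Delta^{\pm1}]\{2a_3^2,2(a_1a_3)a_3^2\}$ of the four-term sequence.

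For (ii): $d_7$ is a derivation and $h_{2,0}^4\nu$ is $2$-torsion, so $d_7(\Delta^2)=2\Delta\cdot d_7(\Delta)=0$; hence $\Delta^2$ survives, and as $E_\infty$ carries no positive-filtration class in degree $48$ — none of the torsion stems $3,6,17,18,20,34,37$ is $\equiv0\pmod{48}$ — $\Delta^2$ lifts to a unit of $\pi_{48}$, which is the claimed $48$-periodicity. Meanwhile $d_7(\Delta)\neq0$ replaces each filtration-$0$ class $\Delta^{2k+1}$ by $2\Delta^{2k+1}$; the resulting defect is exactly the cokernel term $\Delta\,\F_2[\Delta^{\pm2}]$, and the already-noted $d_7(x^{17}\Delta^{-7})=(h_{2,0}^4)^6$ (with $x$ permanent) makes $x^7$ a boundary. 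Assembling (i) and (ii) gives the associated graded of $\pi_*\TMF(\Gamma_0(3))$, with torsion part $\F_2[\Delta^{\pm2}]\{\nu,\nu^2,x,\eta x,\bar\kappa,x^2,\nu x^2\}$.

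The main obstacle is (iii). For the additive extensions I would rule out hidden jumps between the $\zeta$-towers and the filtration-$0$ classes using the transfer argument of \eqref{prop:transfer-argument}: it forces every order-$8$ element of $\pi_*\TMF$ to have nonzero image, which pins the $\bo$- and $\bsp$-towers to be exactly those of $\pi_*\bo$, $\pi_*\bsp$ rather than collapsing. For the multiplicative extensions, each of $a_1a_3\cdot x=\eta\Delta$, $a_1^4x=\eta(a_1a_3)^3$, $a_1a_3\cdot x^2=\eta^2(a_1a_3)a_3^4$, $\nu x=\bar\kappa$, $x^3=\nu\Delta^2$, $x^4=\bar\kappa\Delta^2$, $x^5=\nu x\Delta^2$, $x^6=\nu^2\Delta^4$ asserts that a product lies in a prescribed filtration; I would verify them from multiplicativity of the spectral sequence, naturality of the unit $S^0\to\TMF(\Gamma_0(3))$ (for the relations involving $\eta,\nu,\bar\kappa$ — note $\nu x$ and $\bar\kappa$ occupy the same stem and filtration), and the comparison map $E_r(\TMF)\to E_r(\TMF(\Gamma_0(3)))$ (for the relations among $x$ and $\Delta^2$, which come from $\pi_*\TMF$). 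Finally $x^7=0$ is forced once the additive answer — in particular the vanishing of $\pi_{119}$ — is in hand.
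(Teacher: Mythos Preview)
Your proposal is essentially correct and follows the paper's approach: the proposition in the paper carries no separate proof, being presented as a summary of the spectral sequence computation already carried out, and your three-step plan (compute the $d_3$-cohomology, run the single $d_7$, then read off extensions) is exactly that computation spelled out. Two small corrections are worth making. First, the multiplicative relations $x^3=\nu\Delta^2$, $x^4=\bar\kappa\Delta^2$, $\nu x=\bar\kappa$, etc., do \emph{not} require comparison with $\pi_*\TMF$ (and cannot come from there, since $x$ is not in the image of $\pi_*\TMF$); they are identities already visible on the $E_\infty$-page, because for $s\geq 3$ the map $E_7\to\F_2[\Delta^{\pm1},x]$ is an isomorphism and one checks directly that $h_2=x^3\Delta^{-2}$ and $h_{2,0}^4=x^4\Delta^{-2}$ there. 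Second, the differential $d_7(x^{17}\Delta^{-7})=(h_{2,0}^4)^6$ kills $\bar\kappa^6$, not $x^7$; the class $x^7$ is instead hit by $d_7(\Delta^5)=\Delta^4\cdot h_{2,0}^4\nu$, which equals $x^7$ in $\F_2[\Delta^{\pm1},x]$. With those adjustments your argument goes through.
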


\begin{rem}
It is notable how element $x$ generates all the $v_1$-periodic torsion
in $\pi_*\TMF(\Gamma_0(3))$. It is particularly nice since $x$ and $x^2$ are
$v_1^4$ periodic.  An elementary calculation on homotopy
groups shows that 
there is a cofiber sequence
$$\Sigma^{17}\TMF(\Gamma_0(3)) \xra{x\cdot} \TMF(\Gamma_0(3))
\ra \TMF(\Gamma_1(3))$$
in the category of $\TMF(\Gamma_0(3))$-module spectra.
This seems to be an example of one of the fibrations produced in
\cite{kitchloo-wilson-fibrations-real-spectra}, where such fibrations
are constructed for all the ``real'' Johnson-Wilson spectra.
\end{rem}

\section{Hecke operators, and the building complex}
\label{sec:building-complex}

In this section we describe a version of the ``building complex''
which has been studied fruitfully by Behrens
\cite{behrens-modular-description}.  In the following, we assume that
all schemes $S$ are defined over $\Z[\tfrac{1}{3}]$.

We define a semi-simplicial stack $\sstackN_\bullet$, together with a ``line
bundle'' $\omega_\bullet$ over $\sstackN_\bullet$,  as follows.  Let
$\sstackN_k$ be the moduli stack of data of the form
$$C_0\xra{\phi_1} C_1 \xra{\phi_2} \cdots \xra{\phi_k} C_k,$$
where the $C_i$ are smooth elliptic curves over a base $S$, and the
$\phi_i$ are isogenies of elliptic curves which are not isomorphisms,
such that $\ker(\phi_k\cdots\phi_1)\subseteq C_0[3]$.  In
particular, each $\phi_i$ has degree either $3$ or $3^2$.  Face maps
are defined in the evident way.  It is readily apparent that
$\sstackN_k$ is empty for $k>2$.

The
semi-simplicial stack can be completed to a simplicial stack by
formally adding degeneracies (which amounts to allowing some of the $\phi_i$
to be isomorphisms).

To give the line bundle $\omega_\bullet$,  we set $\omega_k$ to be the
line bundle over $\sstackN_k$ defined by $\omega_k=\omega(C_0)$,
together with  ``descent data'' specified by  the following isomorphisms
\begin{align*}
(d_j^* \omega_{k-1} \xra{\sim} \omega_k) &= (\id\colon \omega(C_0)\ra
\omega(C_0)) & \text{for $0<j\leq k$,}
\\
(d_0^* \omega_{k-1} \xra{\sim} \omega_k) &= (\phi_1^*\colon
\omega(C_1)\ra \omega(C_0)),
\end{align*}
where $\phi_1^*\colon \omega(C_1)\ra \omega(C_0)$ is the map induced
by pulling back $1$-forms; since $3$ is inverted in the ground ring,
these are isomorphisms.

We now describe all the structure here in terms of more familiar
objects, making use of the fact that the moduli of isogenies $C_0\ra
C_1$ is equivalent to the moduli of pairs $(C_0,A)$, consisting of a
curve $C_0$ and a finite subgroup $A$.  Thus, consider the following
morphisms of moduli stacks.  
\begin{align*}
h\colon & \ModuliEll\ra \ModuliEll, && (C)\mapsto (C/C[3]),
\\
f\colon & \ModuliEll(\Gamma_0(3))\ra \ModuliEll, &&  (A<C)\mapsto (C),
\\
q\colon & \ModuliEll(\Gamma_0(3))\ra \ModuliEll, &&  (A<C)\mapsto (C/A),
\\
t\colon & \ModuliEll(\Gamma_0(3))\ra \ModuliEll(\Gamma_0(3)), &&
(A<C) \mapsto (C[3]/A<C/A). 
\end{align*}
We have described the morphisms in terms of the effect on objects.
For instance, $q\colon \ModuliEll(\Gamma_0(3))\ra \ModuliEll$ is the
morphism of stacks associated to the operation which sends the data of
a smooth elliptic curve $C$, together with a subgroup scheme $A<C$
locally isomorphic to $\Z/3$, and produces the quotient group scheme
$C/A$, which is again an elliptic curve.  The notation $C[3]$ denotes
the subgroup of $3$-torsion points in $C$; under our hypotheses on the
base scheme, $C[3]$ is an etale group scheme locally isomorphic to
$\Z/3\times \Z/3$.  

We may now describe the semi-simplicial stack $\sstackN_\bullet$ by the
following picture.
$$\xymatrix{
& {\ModuliEll(\Gamma_0(3))} \ar@<1ex>[dl]|{d_1=f} \ar@<-1ex>[dl]|{d_0=q}
\\
{\ModuliEll}
& {\amalg} 
& {\ModuliEll(\Gamma_0(3))} \ar@<1ex>[ul]|{d_2=\id} \ar@<-1ex>[ul]|{d_0=t} \ar[dl]|{d_1=f}
\\
& {\ModuliEll} \ar@<1ex>[ul]|{d_1=\id} \ar@<-1ex>[ul]|{d_0=h}
\\
{\mathcal{N}_0}
& {\mathcal{N}_1}
& {\mathcal{N}_2}
}$$
The simplicial identities follow from the identities 
\begin{equation}\label{eq:stack-identities}
q=ft\qquad \text{and} \qquad qt=hf.
\end{equation}  The line bundle $\omega_\bullet$ is described by
$$\omega_0=\omega,\qquad
\omega_1|_{\ModuliEll(\Gamma_0(3))}=f^*\omega,\qquad
\omega_1|_{\ModuliEll}=\omega, \qquad \omega_2=f^*\omega,$$
with the non-trivial parts of the descent data given by the maps
$q^*\omega\xra{\sim} f^*\omega$ and $h^*\omega\xra{\sim} \omega$
induced by isogenies.

There is an associated semi-cosimplicial commutative ring spectrum
$$\xymatrix{
& {\TMF(\Gamma_0(3))} \ar@<-1ex>[dr]|{d^2=\id} \ar@<1ex>[dr]|{d^0=t^*} 
\\
{\TMF} \ar@<-1ex>[ur]|{d^1=f^*} \ar@<1ex>[ur]|{d^0=q^*}
\ar@<-1ex>[dr]|{d^1=\id} \ar@<1ex>[dr]|{d^0=h^*}
& {\times}
& {\TMF(\Gamma_0(3))} 
\\
& {\TMF} \ar[ur]|{d^1=f^*}
}$$
Following Behrens, we write $Q(3)$ for the geometric realization of
the cosimplicial ring

There is also a cosimplicial bigraded ring given by
$H^*(\mathcal{N}_\bullet,\omega^{\otimes *})$; we also write
$f^*,q^*,t^*,h^*$ for the induced maps on cohomology.  In this
section we compute the effect of these maps.

Write $\MF_k=H^0(\ModuliEll,\omega^{\otimes k})$ and
$\MF(\Gamma_0(3))_k = H^0(\ModuliEll(\Gamma_0(3)), \omega^{\otimes
k})$ for the rings of modular forms.  Recall that
\begin{align*}
\MF_* &=\Z[c_4,c_6,\Delta, \Delta^{-1}]/(c_4^3-c_6^2-1728\Delta),
\\
\MF(\Gamma_0(3))_* &=
\Z[\tfrac{1}{3},a_1^2,a_3^2,a_1a_3,(a_1^3a_3^3-27a_3^4)^{-1}].
\end{align*}

\begin{prop}\label{prop:formulae}
The maps $f^*\colon \MF\ra \MF(\Gamma_0(3))$, $h^*\colon \MF\ra
\MF$, $q^*\colon \MF\ra \MF(\Gamma_0(3))$, and $t^*\colon
\MF(\Gamma_0(3))\ra \MF(\Gamma_0(3))$, are described by
\begin{align*}
  f^*(c_4) &= a_1^4-24\,a_1a_3,
&  h^*(c_4) &= 3^4 \,c_4,
\\
f^*(c_6) &= -a_1^6+36\,a_1^3a_3-216\,a_3^2,
& h^*(c_6) &= 3^6\, c_6,
\\
f^*(\Delta) &= a_1^3a_3^3-27\,a_3^4,
& h^*(\Delta) &= 3^{12}\, \Delta.
\end{align*}
\begin{align*}
q^*(c_4) &= a_1^4+216\,a_1a_3,
\\
q^*(c_6) &= -a_1^6+540\,a_1^3a_3+5832\,a_3^2,
\\
q^*(\Delta) &= a_1^9a_3-81\,a_1^6a_3^2 +2187\,a_1^3a_3^3 -19683\,a_3^4.
\end{align*}
\begin{align*}
t^*(a_1^2) &= -3\,a_1^2,
\\ 
t^*(a_1a_3) &= \tfrac{1}{3}\, a_1^4-9\,a_1a_3,
\\
t^*(a_3^2) &= -\tfrac{1}{27}\,a_1^6 +2\, a_1^3a_3 -27\, a_3^2.
\end{align*}
\end{prop}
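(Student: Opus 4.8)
I plan to handle the four maps by quite different means. First, $f^*$ is immediate: since $f$ only forgets the level structure, $f^*(c_4)$, $f^*(c_6)$, $f^*(\Delta)$ are the $c$-invariants of the universal curve $C_1\colon y^2+a_1\,xy+a_3\,y=x^3$ of \eqref{prop:gamma-one-three-scheme}, which I would compute by putting $a_2=a_4=a_6=0$ (so $b_2=a_1^2$, $b_4=a_1a_3$, $b_6=a_3^2$, $b_8=0$) into the standard formulas; this also recovers $\Delta=a_3^3(a_1^3-27a_3)$. For $h^*$, the point is that $[3]\colon C\to C$ identifies $C/C[3]$ with $C$, so $h$ is isomorphic to the identity functor on $\ModuliEll$; all the content of $h^*$ therefore lies in the line bundle $\omega_\bullet$, whose descent isomorphism $h^*\omega\xra{\sim}\omega$ is the one the isogeny $C\to C/C[3]$ induces on invariant differentials. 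Since $[n]^*\eta=n\eta$, a weight-$k$ form is multiplied by $3^k$, giving $h^*(c_4)=3^4c_4$, $h^*(c_6)=3^6c_6$, $h^*(\Delta)=3^{12}\Delta$ (and indeed $h^*j=j$, as it must be).

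For $q^*$ I would run Vélu's formulas for the quotient of $C_1$ by the order-$3$ subgroup $A=\{O,P_1,-P_1\}$ with $P_1=(0,0)$ and $-P_1=(0,-a_3)$ (using the inversion formula recorded just before \eqref{prop:gamma-0-3-ss-setup}). For the single non-$2$-torsion pair, the two sums in Vélu's recipe come out to be $a_1a_3$ and $a_3^2$, yielding the Weierstrass model $y^2+a_1\,xy+a_3\,y=x^3-5a_1a_3\,x-(a_1^3a_3+7a_3^2)$ for $C_1/A$, with the quotient isogeny normalized so that it pulls the invariant differential back to the standard one on $C_1$ (hence no further rescaling). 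Reading off the $b$- and $c$-invariants of this model gives the stated $q^*(c_4)$, $q^*(c_6)$, $q^*(\Delta)$, with $c_4^3-c_6^2=1728\Delta$ available as a check.

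For $t^*$ I would avoid a head-on calculation and instead exploit the stack identities $q=ft$ and $qt=hf$ of \eqref{eq:stack-identities}, which on modular forms read $t^*\!\circ f^*=q^*$ and $t^*\!\circ q^*=f^*\!\circ h^*$. Feeding $c_4$ through both identities gives the $\Q$-linear system $t^*(a_1^4-24a_1a_3)=a_1^4+216a_1a_3$ and $t^*(a_1^4+216a_1a_3)=81(a_1^4-24a_1a_3)$, with unique solution $t^*(a_1a_3)=\tfrac13a_1^4-9a_1a_3$ and $t^*(a_1^4)=9a_1^4$. Since $t^*$ is a ring map and the weight-$2$ part of $\MF(\Gamma_0(3))_*$ is $\Z[\tfrac13]\{a_1^2\}$, we get $t^*(a_1^2)^2=9a_1^4$, so $t^*(a_1^2)=\pm3a_1^2$. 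Feeding $c_6$ through $t^*\!\circ f^*=q^*$ and using multiplicativity (so $t^*(a_1^6)=\pm27a_1^6$ and $t^*(a_1^3a_3)=\pm(a_1^6-27a_1^3a_3)$) expresses $216\,t^*(a_3^2)$ as an explicit integer combination of $a_1^6$, $a_1^3a_3$, $a_3^2$ in which the coefficient of $a_1^6$ is $(1\pm9)/216$; only $(1-9)/216=-\tfrac1{27}$ is a $3$-integer, which forces $t^*(a_1^2)=-3a_1^2$ and then $t^*(a_3^2)=-\tfrac1{27}a_1^6+2a_1^3a_3-27a_3^2$. The remaining instances of the two identities (on $c_6$ and on $\Delta$) are then automatic, and serve as consistency checks. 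One could instead compute $t^*$ directly --- the relevant order-$3$ subgroup of $C_1/A$, namely the kernel of the dual isogeny $C_1/A\to C_1$, is generated by a point with $x$-coordinate $-a_1^2/3$, after which one normalizes $(C_1/A,\text{that point})$ to the form of \eqref{prop:gamma-one-three-scheme} --- but tracking how $a_1$ and $a_3$ transform under the translation, shear, and final rescaling there is more laborious. I expect the only genuine obstacle to be the bookkeeping with $\omega_\bullet$ that makes the identities $t^*f^*=q^*$ and $t^*q^*=f^*h^*$ literally true at the level of these explicit rings, all of the relevant $\omega$-twists being isogeny pullbacks compatibly with Vélu's normalization; after that the proposition is linear algebra over $\Q$ together with a single integrality check.
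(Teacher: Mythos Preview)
Your approach is essentially the paper's, with two small but worthwhile variations. For $q^*$ the paper does not invoke V\'elu directly; instead it exhibits an explicit degree-$3$ isogeny $\phi\colon C\ra C'$ (Proposition~\ref{prop-isogeny-construction}) with target $Y^2+a_1XY+3a_3Y=X^3-6a_1a_3X-(9a_3^2+a_1^3a_3)$, obtained by averaging the Weierstrass coordinates under translation by $P_0$. Your V\'elu model $y^2+a_1xy+a_3y=x^3-5a_1a_3x-(a_1^3a_3+7a_3^2)$ differs from this by a $\lambda=1$ Weierstrass change (both satisfy $\phi^*\eta'=\eta$), so both yield the same $c_4,c_6,\Delta$; V\'elu is more off-the-shelf, while the paper's construction is self-contained. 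For the sign in $t^*(a_1^2)=\pm3a_1^2$, the paper simply plugs into $t^*f^*(c_6)=q^*(c_6)$ to pin it down; your integrality trick (only one sign makes the $a_1^6$-coefficient of $t^*(a_3^2)$ land in $\Z[\tfrac13]$) is a neat alternative. One small correction: the weight-$2$ part of $\MF(\Gamma_0(3))_*$ is \emph{not} just $\Z[\tfrac13]\{a_1^2\}$ once $\Delta^{-1}$ is present; the clean reason $t^*(a_1^2)^2=9a_1^4$ forces $t^*(a_1^2)=\pm3a_1^2$ is rather that $\MF(\Gamma_0(3))_*$ is an integral domain.
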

We will give the proof of \eqref{prop:formulae} at the end of the section.

Let $A=\Z[\tfrac{1}{3},a_1,a_3,\Delta^{-1}]$; let $C$ denote the curve
given by Weierstrass equation $y^2+a_1\,xy+a_3\,y=x^3$, and let
$\eta=\frac{dx}{2y+a_1\,x+a_3}$ denote the usual invariant
$1$-form.  Thus $C$ is a model for the universal curve over
$\ModuliEll(\Gamma_1(3))$, with $P_0=(0,0)$ as the distinguished point
of order $3$, and $-P_0=(0,-a_3)$.

\begin{prop}\label{prop-isogeny-construction}
Let $C'$ denote the Weierstrass curve over $A$ defined by the affine
equation
$$Y^2 + a_1XY + 3a_3Y = X^3 -6a_1a_3X -(9a_3^2+a_1^3a_3),$$
with non-vanishing $1$-form $\eta'=dX/(2Y+a_1X+3a_3)$.
There is an isogeny $\phi\colon C\ra C'$ of degree $3$ defined by
$$\phi\colon (x,y) \mapsto (X,Y) =
\left(x-\frac{a_3y}{x^2}+\frac{a_3x}{y}, y-\frac{a_3^2y}{x^3} 
- \frac{a_3x^3}{y^2}\right),$$
and under this map $\phi^*\eta'=\eta$.
The kernel of this isogeny is precisely the subgroup $A$ of order $3$
generated by  $P_0\in C$
\end{prop}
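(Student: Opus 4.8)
The plan is to verify Proposition \ref{prop-isogeny-construction} by direct computation, but organized so that the three assertions (that $\phi$ is a well-defined isogeny of degree $3$, that $\phi^*\eta'=\eta$, and that $\ker\phi = A$) follow with a minimum of brute force. First I would construct $\phi$ not by guessing the formulas but by deriving them: the quotient $C/A$ of $C$ by the order-$3$ subgroup $A = \{O, P_0, -P_0\}$ with $P_0=(0,0)$, $-P_0=(0,-a_3)$ has function field the subfield of $A(C)$ fixed by translation by $P_0$. A standard recipe (V\'elu's formulas, or the averaging construction) produces invariant rational functions $X = x + \sum_{Q\in A\setminus O}(t_Q$-corrections$)$ and $Y = y + \cdots$, and plugging in the two nontrivial points of $A$ gives exactly the stated rational expressions. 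So the first step is: \emph{exhibit} $X,Y$ as the Weierstrass coordinates on $C/A$ coming from V\'elu, and read off that $\phi$ is a degree-$3$ isogeny whose kernel is $A$ — that is really the content of ``the moduli of isogenies $C_0\to C_1$ is equivalent to the moduli of pairs $(C_0,A)$'' invoked earlier in the excerpt.

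Second, I would check that the target curve $C/A$ is the Weierstrass curve $C'$ written in the statement, i.e. that the $X,Y$ produced by V\'elu satisfy $Y^2 + a_1 XY + 3a_3 Y = X^3 - 6a_1a_3 X - (9a_3^2 + a_1^3 a_3)$ identically on $C$. This is a polynomial identity modulo the relation $y^2 + a_1 xy + a_3 y = x^3$ defining $A(C)$; I would clear denominators (the common denominator being a power of $xy$) and reduce, using the curve equation to eliminate high powers of $y$. The coefficients $1, a_1, 3a_3, -6a_1a_3, -(9a_3^2+a_1^3a_3)$ are forced, and one can cross-check them against Proposition \ref{prop:formulae}: the $c_4, c_6, \Delta$ of $C'$ computed from these $a_i'$ (namely $a_1'=a_1$, $a_3'=3a_3$, all others zero) should match $q^*(c_4), q^*(c_6), q^*(\Delta)$ up to the unit scaling coming from $\eta'$ versus the normalized form — this is a useful sanity check even though the proposition can be proved without it.

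Third, the $1$-form identity $\phi^*\eta'=\eta$: since $\phi$ is a separable isogeny of elliptic curves it carries invariant differentials to invariant differentials, so $\phi^*\eta'$ is \emph{some} scalar multiple $c\,\eta$ with $c\in A^\times$; I just need $c=1$. The quickest way is to compare leading terms at the identity $O=[0:1:0]$. In the formal parameter $z = -x/y$ at $O$ one has $\eta = (1 + O(z))\,dz$; one computes the expansion of $X,Y$ hence of $-X/Y$ in powers of $z$, finds $-X/Y = z + O(z^2)$ (the linear term being $1$ because the V\'elu formulas are normalized so $X\sim x$, $Y\sim y$ near $O$), and concludes $\phi^*\eta' = (1+O(z))\,dz = \eta$. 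Alternatively, differentiate the explicit formulas for $X,Y$ and substitute into $dX/(2Y + a_1 X + 3a_3)$, reducing modulo the curve equation — more calculation but fully mechanical.

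The main obstacle is purely the bookkeeping in the second step: verifying the polynomial identity that $C/A$ has the stated affine equation requires reducing a rational expression with denominator a fairly high power of $xy$ modulo the cubic relation, and keeping the $a_1, a_3$ coefficients straight. There is no conceptual difficulty — the existence and degree of $\phi$, and the identification $\ker\phi = A$, are immediate from the moduli interpretation of isogenies already granted in the text — but this is the place where a sign error or a dropped term would propagate, so I would do it by organizing the computation around V\'elu's formulas (which guarantee the answer \emph{is} a Weierstrass curve) and then only needing to match coefficients, rather than verifying smoothness and the group law from scratch.
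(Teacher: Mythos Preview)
Your plan is essentially the paper's own argument: the Remark immediately following the proposition gives exactly your averaging construction ($X = x + \sigma^*x + (\sigma^*)^2 x$, $Y = y + \sigma^*y + (\sigma^*)^2 y$ with $\sigma$ translation by $P_0$), and the proof proper is the direct verification you outline, with the kernel read off from the pole locus of $X,Y$ and smoothness of $C'$ checked via its discriminant $\Delta' = a_3(a_1^3-27a_3)^3$. One slip to fix in your sanity-check paragraph: the Weierstrass coefficients of $C'$ are not ``all others zero'' --- from the displayed equation $a_4' = -6a_1a_3$ and $a_6' = -(9a_3^2 + a_1^3 a_3)$, and indeed the proof of Proposition~\ref{prop:formulae} uses $q^*c_i = C_i(a_1,0,3a_3,-6a_1a_3,-(9a_3^2+a_1^3a_3))$.
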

\begin{proof}
It is straightforward
to check that the function $\phi$ defines a rational map of curves $C\ra
C'$ with $\phi^*\eta' = \eta$, for example using a computer algebra
package. 
One calculates that the discriminant of the curve $C'$ is $\Delta' =
a_3(a_1^3-27a_3)^3$, and hence $C'$ is a non-singular elliptic curve
in Weierstrass form.  Thus one concludes
that $\phi$ is a non-singular map between smooth elliptic curves.  Since the
coordinate functions $X(x,y)$ and $Y(x,y)$ have poles only on the
subgroup $A$ of $E$ we see that $A$ is precisely the kernel of the map.
\end{proof}

\begin{rem}
The curve $C'$ was obtained by the following procedure.  First,
consider the map $\sigma\colon C\ra C$ defined using the group
structure on $E$ by $\sigma(P) = P+P_0$ where $P_0=(0,0)$ is a
generator of $A$.  One
computes that $\sigma(x,y) = (-a_3y/x^2, -a_3^2y/x^3)$.  Then $X = x +
\sigma^*x + (\sigma^*)^2x$ and $Y=y + \sigma^*y + (\sigma^*)^2y$ must
be Weierstrass coordinates for the quotient variety $E'$ and hence
satisfy a Weierstrass polynomial which one can solve for explicitly;
this polynomial is the equation for $C'$.
\end{rem}

\begin{proof}[Proof of \eqref{prop:formulae}]
We  compute the effect of the maps $f^*$ and $q^*$ on modular
forms, using the fact that the isogeny $C\ra C'$ of
\eqref{prop-isogeny-construction} exhibits the universal example of an
isogeny of degree $3$.  Thus, the map $f^*\colon \MF\ra
\MF(\Gamma_0(3))$ sends $c_i\in \MF_k$ ($i=4,6$) to 
$$(f^*c_i)(\phi\colon C\ra C', \eta) = c_i(C,\eta),$$
which can be
read off from formulas found in \cite[p.\ 
46]{silverman-arith-ell-curves} which express the $c_i$s as
polynomials $C_i(a_1,a_2,a_3,a_4,a_6)$ in 
the Weierstrass parameters $a_1,a_2,a_3,a_4,a_6$; in this case, we 
have $f^*c_i=C_i(a_1,0,a_3,0,0)$.
The map $q^*\colon \MF\ra \MF(\Gamma_0(3))$ is described by
$$(q^*c_i)(\phi\colon C\ra C',\eta) = c_i(C',\eta'),$$
so that $q^*c_i=C_i(a_1,0,3a_3,-6a_1a_3, -(9a_3^2+a_1^3a_3))$.

The effect of $h^*$ is computed using the $3$-power isogeny $[3]\colon
C\ra C$.  This isogeny acts on invariant $1$-forms by
$[3]^*\eta=3\eta$, so that 
$$(h^*c_i)([3]\colon C\ra C,\eta) = c_i(C,\tfrac{1}{3}\eta) =
3^i\,c_i(C,\eta).$$ 

The formula for $t^*$ is obtained from the others, using the
identities \eqref{eq:stack-identities}, and the fact that
$\MF(\Gamma_0(3))$ is an integral 
domain.   Thus, we have
$$t^*(10a_1^4)=t^*(9f^*(c_4)+q^*(c_4)) = 9q^*(c_4)+f^*(h^*(c_4))=
90a_1^4,$$
from which we see that $t^*(a_1^2)=\epsilon\,3a_1^2$ where $\epsilon
\in \{\pm 1\}$.  We also have
$$t^*(240a_1a_3)=t^*(q^*(c_4)-f^*(c_4))=f^*(h^*(c_4))-q^*(c_4)=
80a_1^4-2160a_1a_3,$$
whence $t^*(a_1a_3)=\frac{1}{3}a_1^4-9a_1a_3$.  The identity
$t^*((a_1a_3)^2)= t^*(a_1^2)t^*(a_3^2)$ gives $t^*(a_3^2)=\epsilon
(-\frac{1}{27}a_1^6 + 2a_1^3a_3-27a_3^2)$.  Finally, we can use the
identity $t^*f^*(c_6)=q^*(c_6)$ to show that $\epsilon=+1$.
\end{proof}

\section{Some homotopy classes detected by $Q(3)$}  
 
In this section we describe how the image of $J$ classes are detected
in $Q(3)$; this will give a proof of
\eqref{cor:a1a3-is-permanent-cycle} which we needed for the
computation of $\pi_*\TMF(\Gamma_0(3))$.  

Recall that $\TMF$ admits an $E_\infty$ $\String$ orientation
\cite{ando-hopkins-rezk-string-orientation} which refines the Witten
genus.   We use this to detect 
elements in the image of $J$ in the homotopy of $Q(3)$, using the
standard formalism which we review below.

Let $\gamma \colon X\ra B\String$ be the map classifying a stable
string bundle over a pointed space $X$.   Write $T(\gamma)$ for the
Thom spectrum of $\gamma$, and define $\bar{T}(\gamma)$ and $\alpha$
by the cofiber sequence
$$\Sigma^{-1}\bar{T}(\gamma) \xra{\alpha} S^0 \ra T(\gamma) \ra
\bar{T}(\gamma).$$  
Let $R^\bullet$ denote a cosimplicial $E_\infty$-ring spectrum, such
that $R^0$ admits a string orientation, and let $Q=\Tot R^{\bullet}$.
Then the composite $\Sigma^{-1}\bar{T}(\gamma)\xra{\alpha} S^0 \ra Q$
is detected by an element $e(\alpha)\in [\bar{T}(\gamma), R^1]$,
defined by a diagram
$$\xymatrix{
{\Sigma^{-1}\bar{T}(\gamma)} \ar[r]^-{\alpha} 
& {S^0} \ar[r] \ar[d]_{1}
& {T(\gamma)} \ar[r] \ar[d]^t
& {\bar{T}(\gamma)} \ar[d]^{e(\alpha)}
\\
& {Q} \ar[r]
& {R^0} \ar[r]_{d^0-d^1}
& {R^1}
}$$
where $t$ is the map determined by the string orientation of $R^0$.

If $X=\Sigma Y$, then $\bar{T}(\gamma)=\Si \Sigma Y$ and 
$\alpha=J\tilde{\gamma}\colon \Si Y\ra S^0$, where
$\tilde{\gamma}\colon Y\ra \String$ is the adjoint of $\gamma$.

\begin{prop}
Let $\gamma_{2n}\colon S^{4n}\ra B\String$ be the standard generator
($n\geq2$), so that $\alpha_{2n}=J\tilde{\gamma}_{2n}\colon S^{4n-1}\ra S^0$ is
the generator of the image of $J$.  Then $e(\alpha_{2n})\colon S^{4n}\ra
R^1\ra R^1_{\Q}$ is given by 
$$u_n\cdot((d^0)_*(b_{2n})-(d^1)_*(b_{2n})) \in
\pi_{4n}R^1\otimes \Q,$$
where 
$$Q(x)=\frac{x}{\exp_F(x)} = \exp\left[ 2\sum_{k\geq1} b_{2k}
  \frac{x^{2k}}{{2k}!} \right] \in H^*(\CP^\infty, \pi_* R^0\otimes \Q)$$
is the Hirzebruch series associated to the given string orientation of
$R^0$, and $u_n=1$ if $n$ is even and $u_n=2$ if $n$ is odd.
\end{prop}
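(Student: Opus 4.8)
The plan is to trace the element $\alpha_{2n}$ through the defining diagram for $e(\alpha)$ and rewrite everything in terms of the Hirzebruch/Witten characteristic series. First I would recall the classical description (Conner–Floyd, Bott) of the image of $J$ via the Thom spectrum: since $S^{4n}=\Sigma S^{4n-1}$, we have $\bar{T}(\gamma_{2n})=\Si S^{4n}$, and the map $t\colon T(\gamma_{2n})\to R^0$ is the Thom class coming from the given string orientation. The composite $S^{4n}\to \bar T(\gamma_{2n})\xra{e(\alpha_{2n})} R^1$ is then, by the diagram, the image under $d^0-d^1$ of the element of $\pi_{4n}R^0$ obtained by pairing the Thom class $t$ against the fundamental class of $S^{4n}$ sitting in $\bar T(\gamma_{2n})=\Sigma^{-1}\Si S^{4n+1}$; concretely this is the degree-$4n$ coefficient of the $R^0$-theory characteristic class of the bundle $\gamma_{2n}$ relative to the $MO\langle 8\rangle$-orientation. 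The standard generator $\gamma_{2n}\colon S^{4n}\to B\String$ is the one whose associated class, after rationalization, is detected by the $2n$-th coefficient of $x/\exp_F(x)$ — up to the factor $u_n$ coming from the fact that $\pi_{4n-1}S^0$ contains the image of $J$ of order the denominator of $B_{2n}/4n$, and the generator of the image of $J$ maps to the standard generator of $\pi_{4n}B\String\otimes\Q$ times $1$ or $2$ according to the parity of $n$ (this is the usual ``$j_\ast$'' normalization, e.g.\ as in the $e$-invariant story).

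Second, I would identify the degree-$4n$ term. Writing $Q(x)=x/\exp_F(x)=\exp[2\sum_{k\ge1} b_{2k} x^{2k}/(2k)!]$, the bundle $\gamma_{2n}$ is (rationally) the generator of $\widetilde{KO}^0(S^{4n})\otimes\Q$ pulled into $B\String$, so its $R^0$-characteristic number is exactly $b_{2n}\in\pi_{4n}R^0\otimes\Q$, using that only the single monomial $x^{2n}$ contributes on $S^{4n}$ (all products of lower terms vanish for dimension reasons on a sphere). This is where the series definition of $Q$ is used: the coefficient of $x^{2n}/(2n)!$ in $\log Q(x)$, doubled, is $2b_{2n}$, but the relevant characteristic class is the $2n$-th Newton-type coefficient which on a sphere is just $b_{2n}$ itself (the factor of $2$ and the factorial being absorbed into the normalization of the generator of $\pi_{4n}B\String\cong\Z$); the bookkeeping here must be done carefully, and it is cleanest to pin it down by comparison with the known case $R^0=\mathrm{KO}$ (or $\tmf$), where $e(\alpha_{2n})$ recovers the classical $e$-invariant, namely $\pm$ the numerator of $B_{2n}/4n$ appearing in $\pi_{4n-1}S^0$, forcing the normalization constant to be exactly $u_n$.

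Finally, I would assemble: $t\colon T(\gamma_{2n})\to R^0$ lifts to $Q=\Tot R^\bullet$ in the diagram only up to the term measured by $d^0-d^1$, so $e(\alpha_{2n})$ is the image of $b_{2n}$ (suitably pushed forward along the two coface maps $d^0,d^1\colon R^0\to R^1$) under $(d^0)_\ast-(d^1)_\ast$, scaled by $u_n$. That is precisely $u_n\cdot((d^0)_\ast(b_{2n})-(d^1)_\ast(b_{2n}))\in\pi_{4n}R^1\otimes\Q$, as claimed.

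The main obstacle I expect is the normalization/parity constant $u_n$: everything else is a formal chase through the cofiber-sequence diagram plus a dimension count on the sphere, but getting the factor ``$1$ if $n$ even, $2$ if $n$ odd'' right requires either invoking the precise order of the image of $J$ in degree $4n-1$ (denominators of Bernoulli numbers, with the extra factor of $2$ when $n$ is odd reflecting the $KO$- versus $KU$-theoretic $e$-invariant) or, equivalently, citing the classical computation of $e(\alpha_{2n})$ in real $K$-theory and transporting it along the string orientation $\mathrm{KO}\to R^0$. I would handle this by the $\mathrm{KO}$-comparison, since that makes the constant manifestly the right one without re-deriving Bernoulli-denominator arithmetic.
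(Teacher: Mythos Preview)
The paper does not actually prove this proposition: its entire ``proof'' is the sentence that the result is standard, together with a citation to \cite{ando-hopkins-rezk-string-orientation}. So there is nothing to compare your argument against at the level of ideas; any honest attempt at a proof is already more detailed than what the paper provides.

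Your sketch is the right shape and is essentially how the cited reference argues: chase the defining cofiber diagram to reduce $e(\alpha_{2n})$ to $(d^0-d^1)$ applied to the characteristic number of $\gamma_{2n}$ in $R^0$, then compute that characteristic number rationally using the Hirzebruch series on a sphere (where only the single coefficient $b_{2n}$ survives). The one place your write-up is genuinely loose is exactly where you flag it: the constant $u_n$. Your explanation in terms of ``KO- versus KU-theoretic $e$-invariant'' is morally right but not quite the mechanism; the factor comes from the relation between the generator $\gamma_{2n}$ of $\pi_{4n}B\String$ and the Pontryagin/Chern-character generators, i.e., from the index-theoretic identity relating $\hat{A}$-type and $L$-type normalizations, which is where the parity of $n$ enters. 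If you want a clean derivation without re-doing Bernoulli arithmetic, the comparison with KO that you propose does work, but you should state precisely which generator of $\pi_{4n}B\String$ you mean and verify its pairing with $p_n$ (or $\mathrm{ch}_{2n}$) gives the claimed $u_n$; otherwise the ``bookkeeping'' you allude to is still hiding the actual content.
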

\begin{proof}
This is ``standard''; a proof appears, for instance, in
\cite{ando-hopkins-rezk-string-orientation}. 
\end{proof}

In particular, taking $R^\bullet$ to be the cosimplicial ring
associated to the building complex of \S\ref{sec:building-complex},
and $Q=Q(3)=\Tot (R^\bullet)$, we see that $e(\alpha_{2n})\in
\pi_{4n}R^1 \approx \pi_{4n}\TMF(\Gamma_0(3))\times \pi_{4n}\TMF$ is a
class which modulo torsion has the name
$$(u_{n}\cdot (q^*G_{2n}-f^*G_{2n}), u_n\cdot (3^{2n}-1)G_{2n}),$$
where $G_{2n}\in \mf_{2n}\otimes \Q$ is the unnormalized Eisenstein
series with $q$-expansion
$$G_{2n}(q) = -\frac{B_{2n}}{4n} + \sum_{m\geq0} q^m \sum_{d|m}
d^{2n-1}.$$

\begin{cor}\label{cor:a1a3-is-permanent-cycle}
There is an element in $\pi_8\TMF(\Gamma_0(3))$ which maps to
$a_1a_3\in \pi_8\TMF(\Gamma_1(3))$.  
\end{cor}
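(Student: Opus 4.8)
The plan is to extract the element of $\pi_8\TMF(\Gamma_0(3))$ from the Eisenstein-series calculation of $e(\alpha_4)$, using the standard principle that an image-of-$J$ class which is killed by the attaching map into $Q(3)$ is detected in the cosimplicial filtration by a permanent cycle. Concretely, I would take $n=2$ in the Proposition computing $e(\alpha_{2n})$: then $u_2=1$, and the class $e(\alpha_4)\in \pi_8 R^1 \approx \pi_8\TMF(\Gamma_0(3))\times \pi_8\TMF$ has, modulo torsion, the name $(q^*G_4 - f^*G_4,\ (3^4-1)G_4)$. The key point is that $e(\alpha_4)$ is a $d_1$-boundary in the cosimplicial complex — it is $(d^0-d^1)$ applied to the image $t(\alpha_4)$ of $\alpha_4$ under the string orientation of $R^0=\TMF$ — so it represents a class that \emph{lifts} along $\pi_8\TMF(\Gamma_0(3))\to E_1^{1,\bullet}$, i.e.\ there is a genuine element of $\pi_8 Q(3)$, hence (after projecting) of $\pi_8\TMF(\Gamma_0(3))$, whose image in the $\MF$-cohomology is this boundary.

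The next step is a purely algebraic identification of $q^*G_4 - f^*G_4$ inside $\MF(\Gamma_0(3))_4 = H^0(\ModuliEll(\Gamma_0(3)),\omega^{\otimes 4})$ in terms of the generators $a_1^4, a_1a_3, a_3^2$. Since $G_4$ is, up to a nonzero rational scalar, the level-$1$ modular form $c_4$ (both span $\mf_4\otimes\Q$, which is one-dimensional), I would use the formulae of \eqref{prop:formulae}: $f^*(c_4) = a_1^4 - 24\,a_1a_3$ and $q^*(c_4) = a_1^4 + 216\,a_1a_3$, so $q^*(c_4) - f^*(c_4) = 240\,a_1a_3$. Thus $q^*G_4 - f^*G_4$ is a nonzero rational multiple of $a_1a_3$. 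Pulling the resulting element of $\pi_8\TMF(\Gamma_0(3))$ (which modulo torsion names a rational multiple of $a_1a_3$) back along the map $\TMF(\Gamma_0(3))\to\TMF(\Gamma_1(3))$ — under which $\MF(\Gamma_0(3))_*\hookrightarrow \Z[\tfrac13,a_1,a_3,\Delta^{-1}]$ — shows its image in $\pi_8\TMF(\Gamma_1(3))$ is a rational multiple $c\cdot a_1a_3$; it remains to see that (after possibly rescaling by a unit, and after checking there is no denominator obstruction) one can arrange $c$ to be a $3$-adic unit, so that some integral combination hits $a_1a_3$ exactly. This is where the constant $u_n$ and the precise Bernoulli denominators in $G_4 = -\tfrac{B_4}{8} + \cdots = \tfrac{1}{240} + \cdots$ matter: the factor $\tfrac{1}{240}$ exactly cancels the $240$ from $q^*(c_4)-f^*(c_4)$, which is the reason the class is integrally $a_1a_3$ on the nose rather than merely a multiple.

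I expect the main obstacle to be precisely this denominator bookkeeping — tracking the normalization of $G_4$ versus $c_4$, the $u_2=1$ factor, and the localization at $3$ carefully enough to conclude that the element of $\pi_8\TMF(\Gamma_0(3))$ produced by the image-of-$J$ construction maps to $a_1a_3$ (and not, say, $240 a_1a_3$ or $a_1a_3/240$) in $\pi_8\TMF(\Gamma_1(3))$. A secondary subtlety is confirming that the torsion ambiguity in $e(\alpha_4)\in\pi_8\TMF(\Gamma_0(3))\times\pi_8\TMF$ does not interfere: one should check that $\pi_8\TMF(\Gamma_0(3))$ has no relevant torsion in this stem mapping to zero in $\pi_8\TMF(\Gamma_1(3))$ that could obstruct the lift, but by the corollary computing $\pi_*\TMF(\Gamma_1(3))\approx\Z[\tfrac13,a_1,a_3,\Delta^{-1}]$ the target is torsion-free, so any torsion in $e(\alpha_4)$ is harmless and the image in $\pi_8\TMF(\Gamma_1(3))$ is unambiguously the claimed multiple of $a_1a_3$. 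Once the normalization is pinned down, the corollary follows immediately.
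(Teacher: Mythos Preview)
Your approach is essentially the paper's: take the $\TMF(\Gamma_0(3))$-component of $e(\alpha_4)\in\pi_8 R^1\approx\pi_8\TMF(\Gamma_0(3))\times\pi_8\TMF$, and compute $q^*G_4-f^*G_4=\tfrac{1}{240}(q^*c_4-f^*c_4)=a_1a_3$ from the formulae of Proposition~\ref{prop:formulae} and the relation $240\,G_4=c_4$. Your detour through $\pi_*Q(3)$ is unnecessary and slightly misdirected --- $e(\alpha_4)$ is by construction already an element of $\pi_8 R^1$, so one simply projects to the first factor; the torsion ambiguity dies in the torsion-free target $\pi_8\TMF(\Gamma_1(3))$ exactly as you observe.
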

\begin{proof}
The element is given by $e(\gamma_4)\colon S^8\ra \TMF(\Gamma_0(3))$,
since $240\,G_4=c_4$, so that
$q^*G_4-f^*G_4=\tfrac{1}{240}[(a_1^4+24\,a_1a_3)-(a_1^4-216\,a_1a_3)]
= a_1a_3$.
\end{proof}

A number of elements in $\pi_*S^0$ are detected in $\pi_*Q(3)$; we
hope to provide calculations of these in a future paper.

\section{Connective models for $\TMF(\Gamma_0(3))$}

By a ``connective model'' of a $v_2$-periodic ring spectrum $R$, we mean a
connective spectrum $X$ such that $L_{K(2)}X\approx L_{K(2)}R$.  More
optimistically, we can ask that $L_{K(1)\vee K(2)}X\approx
R^{\eev}_p$.  Even more optimistically, we may hope that $X$ is a ring
spectrum, or even an $E_\infty$-ring.

Thus, the periodic spectrum of topological modular forms
$\TMF$ comes with a canonical connective model $\tmf$, which is itself
an $E_\infty$-ring.  There seems to be no known construction of a
connective $\tmf(\Gamma_0(3))$ which is also an $E_\infty$-ring.

In this section, all results are at the prime $2$.  Cohomology refers
to mod $2$ cohomology.  
We will make much use of the fact that $H^*\tmf\approx
A\otimes_{A(2)}\Z/2$. 

Our first result is to construct a simple finite complex  and a map
of this complex into $\TMF(\Gamma_0(3))$ such that the extension over
$\TMF$ is a weak equivalence.  We will think of this 
as a recognition complex.  Then if we can map this complex into other
naturally occurring module spectra we can determine if such a spectrum
is a connective model for $\TMF(\Gamma_0(3))$.  

We begin with a finite complex $X=\bo_1$, which is the $7$-skeleton of
$\bo$; that is, $X=S^0\cup_\nu e^4\cup_\eta e^6\cup_{2\iota} e^7$.
Let $Y=\Sigma^7 D\bo_1$; that is, $Y$ is the Spanier-Whitehead dual of
$\bo_1$, shifted so that the bottom cell is in dimension $0$.

\begin{thm}
There is a map $f\colon \Sigma^6X\ra Y$ so that the composite $S^6\ra
\Sigma^6X\ra Y$ is $\nu^2$ on the bottom cell.
\end{thm}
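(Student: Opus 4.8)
The plan is to build the map $f\colon \Sigma^6 X\to Y$ skeleton by skeleton, using obstruction theory driven by the cell structure of $X=\bo_1 = S^0\cup_\nu e^4\cup_\eta e^6\cup_{2\iota}e^7$ and the (dual) cell structure of $Y=\Sigma^7 D\bo_1$. First I would record the cells of $Y$: since $D\bo_1$ has cells in dimensions $0,-4,-6,-7$ attached by the Spanier--Whitehead duals of the attaching maps of $\bo_1$ (so $\nu$, $\eta$, $2\iota$ appear again, read upside down), $Y$ has cells in dimensions $0,1,3,7$, with the bottom three forming a copy of $\Sigma^0(\text{dual } 3\text{-skeleton})$: concretely $Y_{(3)} = S^0\cup_{2\iota}e^1\cup_\eta e^3$ is the relevant bottom piece, and the top cell $e^7$ is attached to it by a class built out of $\nu$. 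The source $\Sigma^6 X$ has cells in dimensions $6,10,12,13$. I want the composite $S^6\hookrightarrow\Sigma^6 X\xra{f}Y$ to be $\nu^2\colon S^6\to S^0\hookrightarrow Y$; since $\nu^2\in\pi_6 S^0$ has order $2$ and $\nu^2\ne 0$, and since the bottom cell of $Y$ is a retract summand through dimension $0$, this is a well-defined nonzero class to start from.

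The key steps, in order: (1) Define $f$ on the bottom cell $S^6$ of $\Sigma^6 X$ to be $\nu^2$ into the bottom cell of $Y$. (2) Extend over the $10$-cell: the obstruction lies in $\pi_9 Y$ (the $4$-cell of $\Sigma^6 X$ is attached by $\nu\colon S^9\to S^6$, so the obstruction is $f|_{S^6}\circ\nu = \nu^3\colon S^9\to Y$, composed into $\pi_9 Y$), and I must check that $\nu^3$ becomes null in $Y$ — this is plausible because $\nu^3 = \eta\epsilon = \eta^2\sigma$ up to the relation $\nu^3+\eta\epsilon = 0$ in $\pi_9 S^0$, and the $1$-cell of $Y$ (attached by $2\iota$) together with the $3$-cell (attached by $\eta$) is designed to kill exactly such products; one computes $\pi_9 Y$ directly from the Atiyah--Hirzebruch spectral sequence for $Y$ using $\pi_* S^0$ in the relevant low range. (3) Extend over the $12$-cell (attached by $\eta$) and (4) over the $13$-cell (attached by $2\iota$): each gives an obstruction in $\pi_{11}Y$ and $\pi_{12}Y$ respectively, again computed from the AHSS for $Y$, and one checks these vanish (or can be arranged to vanish by modifying the choice of the previous extension — the extensions are not unique, the indeterminacy being $\pi_{*}Y$ of the appropriate degree, and one uses this freedom). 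Throughout, the computations of $\pi_n Y$ for $n=9,11,12$ are the technical heart and are done via the AHSS $H_*(Y;\pi_*S^0)\Rightarrow \pi_*Y$, whose differentials are governed by the attaching maps $\nu,\eta,2\iota$ that define $Y=\Sigma^7 D\bo_1$; since $\bo_1$ is so small these spectral sequences have few entries and the needed groups are accessible.

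The main obstacle is step (2): showing $\nu^3$ dies in $Y$, equivalently analyzing $\pi_9 Y$ and verifying the relevant class is zero. This is where the specific attaching maps of $\bo_1$ matter — the point of choosing $\bo_1$ (rather than, say, just $S^0\cup_\nu e^4$) is precisely that its dual carries the $\eta$- and $2\iota$-cells needed to trivialize the products $\nu^3$ (and later $\eta\cdot(\text{image of }\nu^2)$, etc.) that arise. I expect that the cleanest route is to observe that $\bo_1$ is the $7$-skeleton of $\bo$, so $D\bo_1$ maps to (a suspension of) $D\bo$, and one can import Toda-bracket relations in $\pi_*\bo$ and $\pi_*S^0$ — in particular the relation $\nu^2\in\langle\eta,\nu,\eta\rangle$ or the standard Toda brackets expressing $\nu^3,\eta\epsilon$ via $\eta$, $\nu$, and $2$ — to see that the composite $S^6\xra{\nu^2}S^0\hookrightarrow Y$ automatically extends, because the obstructions are exactly the bracket ambiguities that $Y$'s cells resolve. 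Once the first extension is in place, the remaining extensions over the $12$- and $13$-cells should follow by the same AHSS bookkeeping, using the indeterminacy in each extension to absorb any nonzero obstruction, and possibly composing with a self-map or using that $Y$ is a module-like object to control the answer.
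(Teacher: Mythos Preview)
Your strategy is the same as the paper's: build $f$ cell by cell over the CW structure of $\Sigma^6 X$, checking that the successive obstructions in $\pi_9 Y$, $\pi_{11} Y$, $\pi_{12} Y$ vanish. The difference is purely in the tool you propose for computing $\pi_* Y$. The paper simply runs the Adams spectral sequence $\Ext_A(H^*Y,\Z/2)\Rightarrow \pi_*Y$ through dimension $14$; the $E_2$-term is small enough to draw completely, there are no differentials in this range, and one reads off directly that (i) there is a class in $\pi_6 Y$ detecting $\nu^2$, (ii) $\nu$ times this class is zero for Adams-filtration reasons, and (iii) $\pi_{11}Y=\pi_{12}Y=0$, so the extension over the top Moore space of $\Sigma^6 X$ is automatic with no need to manipulate indeterminacy.

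Your AHSS approach would also reach the goal, but is noisier: the cellular chain complex of $Y$ already has a nontrivial $d_1$ (the $2\iota$ attaching map), so $H_*(Y;\Z)$ has torsion and the $E_2$-page has $\Tor$ contributions; you would then have to identify the $\eta$- and $\nu$-differentials by hand. More importantly, the Toda-bracket digression (``$\nu^2\in\langle\eta,\nu,\eta\rangle$'', importing relations from $\pi_*\bo$, etc.) is unnecessary and somewhat off-target: none of that is needed once $\pi_*Y$ is computed in the relevant range, and the paper's point is precisely that this computation is routine. In short, your plan is correct, but you are making step (2) sound harder than it is and step (3)--(4) sound subtler than they are; the cleanest execution is to replace the AHSS/bracket discussion with the small Adams chart for $Y$ and read everything off.
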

\begin{proof}
The proof is straightforward after computing $\pi_*Y$ through
dimension $14$.  The following is the Adams $E_2$-term in the usual
way of presenting such charts.
\begin{center}
\begin{picture}(200,200)
\put(10,10){\line(1,0){190}}
\multiput(20,7)(20,0){9}{\line(0,1){5}}
\put(5,10){\line(0,1){150}}
\multiput(3,10)(0,20){8}{\line(1,0){5}}
\multiput(10,10)(10,10){2}{\elt}
\put(10,10){\line(1,1){10}}
\multiput(40,20)(30,10){2}{\elt}
\put(40,30){\vector(0,1){90}}
\multiput(40,30)(0,10){9}{\elt}
\multiput(80,20)(10,10){2}{\elt}
\put(80,20){\line(1,1){10}}
\put(80,40){\vector(0,1){80}}
\put(80,40){\line(1,1){30}}
\multiput(80,40)(0,10){8}{\elt}
\multiput(80,40)(10,10){4}{\elt}
\put(110,40){\line(0,1){30}}
\multiput(110,40)(0,10){3}{\elt}
\put(160,30){\line(0,1){60}}
\multiput(160,30)(0,10){7}{\elt}
\put(157,40){\elt}
\put(40,-5){$Ext_A(H^*(Y),\Z/2)$}
\end{picture}
\end{center}

\bigskip

From this chart it is easy to see that the class which begins the map
$\Sigma^6X\ra Y$ sends the bottom class to a class in the six stem.
Clearly, $\nu$ on this class is zero and this allows an extension over
the four skeleton of $X$.  The last Moore space maps in dimensions
$11$ and $12$, both of which are zero. 
\end{proof}

The following result is our recognition principle.  Let $Z=Y\cup_f
C\Sigma^6X$.  
\begin{prop}\label{prop:recognition-complex}
There is a map $g\colon \Sigma^{17}Z\ra \TMF(\Gamma_0(3))$ whose
extension to $\Sigma^{17}Z\sm \TMF\ra \TMF(\Gamma_0(3))$ is a weak
equivalence. 
\end{prop}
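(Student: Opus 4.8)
The plan is to build the map $g$ cell by cell, using the computation of $\pi_*\TMF(\Gamma_0(3))$ from the previous section as the target, and then verify the equivalence by a rank count after smashing with $\TMF$. First I would record that $Z$ is a $4$-cell complex with cells in dimensions $0,6,7,13$ (the dual $Y=\Sigma^7 D\bo_1$ has cells in $0,1,3,7$ dualizing the cells of $\bo_1$ in $0,4,6,7$; attaching a cone on $\Sigma^6 X$ along $f$ adds cells in $6,7,13$ but the map $f$ is built so that $Z$ has the homotopy type of a complex with cells in dimensions $0,6,7,13$, with attaching maps governed by $\nu^2$, $\eta$, and $2\iota$). After the $17$-fold suspension, the bottom cell of $\Sigma^{17}Z$ sits in dimension $17$. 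The element of $\pi_{17}\TMF(\Gamma_0(3))$ which I would send it to is precisely the class $x$, which exists by the computation in \S\ref{sec:homtopy-tmf-0-3} (it is a permanent cycle represented by $\zeta a_3^3\in E_2^{1,18}$, and by Corollary \eqref{cor:a1a3-is-permanent-cycle} the relevant differentials vanish). The key multiplicative relations $\nu x=\bar\kappa$, $x^3=\nu\Delta^2$, etc., recorded in the Proposition describing $\pi_*\TMF(\Gamma_0(3))$, are what allow the attaching maps of $\Sigma^{17}Z$ to be killed, so that $x$ extends over successive cells: the obstruction to extending over the $\nu^2$-cell is $\nu^2 x$, which I would need to see is zero in $\pi_{23}\TMF(\Gamma_0(3))$, and so on up the cells of $Z$.

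Next, for the equivalence statement, I would smash with $\TMF$ and compute $\pi_*(\Sigma^{17}Z\wedge\TMF)$ using the fact that $H^*Z$ is a free module over a small subalgebra of the Steenrod algebra — indeed the whole point of choosing $X=\bo_1$ and $Y$ its dual is that $H^*Z$ is (stably) $A(2)/\!/A(1)$ or a closely related $A(2)$-module, so that $H^*(\Sigma^{17}Z\wedge\tmf)=\Sigma^{17}H^*Z\otimes_{A(2)}\Z/2$ and the Adams spectral sequence collapses or nearly collapses. The target $\pi_*\TMF(\Gamma_0(3))$ is known completely (the exact sequence in the Proposition), and it is a finitely generated free module over $\pi_*\tmf$ localized appropriately — concretely it is built from $\bo_*[\tfrac13,\Delta^{\pm1}]$ and $\bsp_*[\tfrac13,\Delta^{\pm1}]$ summands plus $v_1$-periodic torsion generated by $x$. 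I would then check that $g\wedge\TMF$ induces an isomorphism on homotopy in each degree: on the torsion-free part this is a matter of matching the $\bo$- and $\bsp$-summands of $\pi_*(Z\wedge\TMF)$ with those of $\pi_*\TMF(\Gamma_0(3))$ under the map sending $1\mapsto x$, using that $X=\bo_1$ is precisely the $7$-skeleton of $\bo$; on the torsion part one uses that $x$ generates all the $v_1$-periodic torsion (the Remark following the Proposition), and that $x$ and $x^2$ are $v_1^4$-periodic, so the torsion in $\pi_*(Z\wedge\TMF)$, which is also $v_1$-periodic and generated in the same stems, maps isomorphically.

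The main obstacle I expect is the first step: showing that $x\in\pi_{17}\TMF(\Gamma_0(3))$ actually extends to a map out of $\Sigma^{17}Z$, i.e.\ that all the relevant obstructions in $\pi_*\TMF(\Gamma_0(3))$ vanish. This is not automatic — it requires knowing not just the additive structure but the Toda-bracket / secondary structure of $\pi_*\TMF(\Gamma_0(3))$ well enough to see that the attaching maps of $Z$ die, and some of these may require choosing the map $f$ of the preceding Theorem carefully (the Theorem only asserts existence of some $f$ restricting to $\nu^2$ on the bottom cell, and different choices differ by lower filtration terms that could matter). I would handle this by working one cell at a time in the Adams spectral sequence for $\TMF(\Gamma_0(3))$, using the explicit $d_3$ and $d_7$ differentials computed in \S\ref{sec:homtopy-tmf-0-3} and the multiplicative relations involving $x$, $\nu$, $\eta$, and $\bar\kappa$ to rule out each obstruction; the relation $\nu x=\bar\kappa$ together with $x^7=0$ and the periodicity $\Delta^{\pm2}$ pins down enough of the ring structure that the bookkeeping, while tedious, is forced. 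Once $g$ exists, the equivalence-after-smashing-with-$\TMF$ claim is a finite check in each of the $48$ stems of a period, comparing two explicitly known $\pi_*\TMF$-modules.
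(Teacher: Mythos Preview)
Your overall strategy is exactly the paper's: build $g$ so that on the bottom cell it is $x\colon S^{17}\to\TMF(\Gamma_0(3))$, and then read off both the extension over the higher cells and the equivalence after smashing with $\TMF$ from the computation in \S\ref{sec:homtopy-tmf-0-3}.  The paper's own proof consists of precisely those two observations.

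There is, however, a concrete error in your cell count for $Z$, and it propagates into your obstruction analysis.  You correctly note that $Y=\Sigma^7 D\bo_1$ has cells in $0,1,3,7$.  But $\Sigma^6X$ has cells in $6,10,12,13$, so taking the mapping cone of $f$ adds cells in $7,11,13,14$.  Thus $Z$ is an $8$-cell complex with cells in dimensions $0,1,3,7,7,11,13,14$, not a $4$-cell complex in $0,6,7,13$.  (Your own remark that $H^*Z$ should be something like $A(2)/\!/A(1)$---which is $8$-dimensional over $\F_2$---already contradicts the $4$-cell claim.)  After suspending by $17$ the cells of $\Sigma^{17}Y$ lie at $17,18,20,24$, with attaching maps $2$, $\eta$, $\nu$ (these are the duals of the attaching maps of $\bo_1$, read in reverse order).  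So the first obstruction is $2x=0$, not $\nu^2x$; then one must extend across an $\eta$-attachment and a $\nu$-attachment before ever reaching the cells coming from $C\Sigma^6X$.  The bookkeeping you describe would need to be redone against this correct skeleton, and the comparison with $\pi_*\TMF(\Gamma_0(3))$ likewise has to account for eight cells' worth of $\TMF$-homology rather than four.
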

\begin{proof}
This follows from the calculations of $\pi_*\TMF(\Gamma_0(3))$ we made
in \S\ref{sec:homtopy-tmf-0-3}.  In particular, $g$ is constructed so
that on the bottom cell it is given by $x\colon S^{17}\ra
\TMF(\Gamma_0(3))$.  
\end{proof}

We will use this result to show that various naturally occurring
spectra $X$ are copies of covers of $\TMF(\Gamma_0(3))$.  In each case, a
complete homotopy calculation of $X$ looks like a connective version of
$\TMF(\Gamma_0(3))$; this allows the construction of a map from $Z$ to $X$
just as in the proof of \eqref{prop:recognition-complex}.  When $X$ is
a $\tmf$-module, the map extends to a map $Z\sm \tmf$ to $X$.

The first example is the bit in $M\String$ which begins in dimension
$16$.  Recall that the cohomology of $M\String$ is free over
$A\otimes_{A(2)} \Z/2$.  In dimension $16$ there is a free generator
and starting in dimension $20$ there begins an extended $A(2)$-module
$A(2)/(\sq^1,\sq^5,\sq^6)$.   It was determined in
\cite{davis-mahowald-ext-a2-stunted-projective} that these two pieces
are connected by 
$2\nu$.  The homotopy of the resulting $\tmf$ module was computed in
\cite{gorbounov-mahowald-mo8}.  An inspection of that calculation shows
that this module is just a connected version of
$\pi_*\TMF(\Gamma_0(3))$.  

We write $\tmf(\Gamma_0(3))$ for this connected model of
$\TMF(\Gamma_0)$.  It is of special interest,
since it has a good chance of being a ring spectrum.  The homotopy of
$\pi_*\tmf(\Gamma_0(3))$ is described in
\cite{gorbounov-mahowald-mo8}; in terms of the spectral sequence calculation of
$\pi_*\TMF(\Gamma_0(3))$ given in \S\ref{sec:homtopy-tmf-0-3}, the
homotopy of $\tmf(\Gamma_0(3))$ corresponds to the part of the
$E_2^{s,t}$-term for which (i) $s\leq t-s$ (i.e., below the line of slope
one), and (ii) whose $E_1$-names only include non-negative powers of
the element $\Delta$.


\begin{rem}
The module $\Z/2\oplus \Sigma^4 A(2)/(\sq^1,\sq^5,\sq^6)$ admits the
following nice description: it is isomorphic as a module to
$H^*(\bo_1)^{\otimes 2}/\Sigma_2$,  the symmetric coinvariants of
$H^*\bo_1$.
\end{rem}

In \cite{gorbounov-mahowald-mo8} it is also shown that there is an
extended $A(2)$-module $M$ beginning in dimension $44$ which is 
$$A(2)\{a_{44},a_{49}\}/(\sq^1 a_{44},\,\sq^5a_{44},\,
\sq^7a_{44}+\sq^2 a_{49},\, \sq^4 a_{49}+\sq^6\sq^3 a_{44}).$$
The Adams spectral sequence for this extended $A(2)$-module is easy to
compute.  Let $M_1=A(2)\{a_5\}/A(2)\{\sq^2a_5,\,\sq^4a_5\}$, and
$M_2=A(2)\{a_0\}/A(2)\{\sq^1a_0,\,\sq^5a_0,\,\sq^{13}a_0\}$.  Then we
have a short exact sequence
$$M_2\ra M\ra M_1.$$
The $\Ext$ chart for $M_1$ is computed in
\cite{davis-mahowald-ext-a2-stunted-projective}.  The module $M_2$
fits into an exact sequence
$$M_2\ra A(2)/(\sq^1,\sq^5) \ra
A(2)\{a_{13}\}/(\sq^1,\sq^2,\sq^{10}).$$
The $\Ext$-chart for the right module is computed in
\cite{davis-mahowald-ext-a2-stunted-projective} while the middle
module is just $H^*\bsp$.  Thus all parts are easily computed and
there is a non-trivial connecting homomorphism in the sequence to
compute $\Ext_{A(2)}(M)$.  Again there are no possible Adams
differentials.  Its homotopy agrees with the $11$ homotopy skeleton of
the fiber of a map from $\tmf(\Gamma_0(3))\ra \bo\vee \Sigma^8\bo$.
The first map picks up the unit and the second picks up $a_1a_3$.

These two results suggest that a possible path toward understanding
$M\String$ is via these module maps and others like them.  We have the
orientation now thanks to Ando, Hopkins, and Rezk.  

The $A(2)$-module structure of $A\otimes_{A(2)}\Z/2$ is known.  It is
most easily described in terms of $\bo$-Brown-Gitler spectra.  Compare
\cite{goerss-jones-mahowald-generalized-bg-spectra}.  The first two
after the unit are $\Sigma^8\bo_1 = \Sigma^8 X$ and
$\Sigma^{16}\bo_2$.  Because of a differential in the Adams spectral
sequence, these two pieces are connected.  We have the following:
\begin{prop}
$K(2)$-locally, there is a cofiber sequence
$$\Sigma^{32}\TMF \ra (\Sigma^8\bo_1\cup \Sigma^{16}\bo_2)\sm \TMF\ra
\TMF(\Gamma_0(3)).$$ 
\end{prop}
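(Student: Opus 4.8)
The plan is to realize the displayed sequence as a cofiber sequence of $\TMF$-module spectra after $K(2)$-localization: first construct a $\TMF$-module map $g\colon(\Sigma^8\bo_1\cup\Sigma^{16}\bo_2)\sm\TMF\to\TMF(\Gamma_0(3))$ by obstruction theory over the homotopy calculation of \S\ref{sec:homtopy-tmf-0-3}, and then identify $\mathrm{fib}(g)$ with $\Sigma^{32}\TMF$ after $L_{K(2)}$.

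For the construction I would argue exactly as in the proof of \eqref{prop:recognition-complex}. The finite complex $\Sigma^8\bo_1\cup\Sigma^{16}\bo_2$ is the subquotient of $\tmf$ cut out by the first two $\bo$-Brown-Gitler layers above the unit, under the identification of $H^*\tmf\approx A\otimes_{A(2)}\Z/2$ with $\bo$-Brown-Gitler spectra (\cite{goerss-jones-mahowald-generalized-bg-spectra}); its cells are glued by the map realizing the Adams differential mentioned above. Starting on the bottom cell with the $\TMF$-module map $\Sigma^8\TMF\to\TMF(\Gamma_0(3))$ given by $a_1a_3\in\pi_8\TMF(\Gamma_0(3))$ (a permanent cycle, by \eqref{cor:a1a3-is-permanent-cycle}), I would extend over the remaining cells of $\Sigma^8\bo_1$ and then of $\Sigma^{16}\bo_2$; the successive obstructions lie in homotopy groups that the computation of \S\ref{sec:homtopy-tmf-0-3} shows to vanish. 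One then compares $\pi_*$ on both sides as graded $\pi_*\TMF$-modules: the description of $\pi_*\TMF(\Gamma_0(3))$ obtained in \S\ref{sec:homtopy-tmf-0-3}, built from $\bo_*$, $\bsp_*$ and the $v_1$-periodic torsion generated by $x$, agrees, up to the single cell that will account for $\Sigma^{32}\TMF$, with $\pi_*$ of $(\Sigma^8\bo_1\cup\Sigma^{16}\bo_2)\sm\TMF$; equivalently, $\mathrm{fib}(g)$ has the mod $2$ cohomology of $\Sigma^{32}\TMF$.

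What remains is to upgrade this to an equivalence $\mathrm{fib}(g)\simeq\Sigma^{32}\TMF$, and here the $K(2)$-local hypothesis is essential: integrally the fiber differs from $\Sigma^{32}\TMF$ by low-dimensional, height $\leq 1$ information, so a height $2$ argument cannot be avoided. The cleanest way to finish would be to observe that all the spectra in the sequence are dualizable modules over $L_{K(2)}\TMF$, so the comparison map $\mathrm{fib}(g)\to\Sigma^{32}L_{K(2)}\TMF$ can be checked after smashing with Morava $K(2)$-theory, or directly via a computation with the Morava $E$-theory of the supersingular elliptic curve and its $\Gamma_0(3)$-cover, where $\TMF$, $\TMF(\Gamma_0(3))$, and the finite complex $\Sigma^8\bo_1\cup\Sigma^{16}\bo_2$ all become explicit. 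The main obstacle is precisely this last identification, since the finite-complex and homotopy-group bookkeeping only determines the answer up to $\F_2$-cohomology and one must invoke the supersingular picture to rigidify it.
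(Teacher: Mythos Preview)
Your outline is close in spirit to the paper's one–sentence proof (``compute the homotopy of the middle spectrum and then apply the recognition principle''), but the mechanism differs.  The paper does not build the map $(\Sigma^8\bo_1\cup\Sigma^{16}\bo_2)\sm\TMF\to\TMF(\Gamma_0(3))$ cell by cell from the finite complex; instead it first computes $\pi_*$ of the middle term outright, and then invokes the recognition complex $Z$ of \eqref{prop:recognition-complex} (whose bottom cell hits $x\in\pi_{17}$, not $a_1a_3\in\pi_8$) to pin down the $\TMF(\Gamma_0(3))$ piece.  The advantage of that route is exactly that it dissolves the ``main obstacle'' you flag in your last paragraph: once the homotopy of the middle spectrum is in hand and a suitable map from $\Sigma^{17}Z$ has been produced, the equivalence $\Sigma^{17}Z\sm\TMF\simeq\TMF(\Gamma_0(3))$ from \eqref{prop:recognition-complex} identifies the relevant piece on the nose, and what is left over is visibly a single free $\TMF$–cell in degree $32$.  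Your approach, by contrast, has to separately argue that $\mathrm{fib}(g)\simeq\Sigma^{32}\TMF$ after $K(2)$–localization, and you end by conceding that this step is not settled.

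There is also a smaller issue with your construction of $g$: you assert that the bottom cell should go to $a_1a_3$ and that all further obstructions vanish, but you have not actually carried out the homotopy computation of the middle spectrum that would justify either claim or guarantee that the resulting map has the correct effect on $\pi_*$.  In the paper's argument that computation is the real content of the proof, and the recognition principle is the device that converts it into the desired cofiber sequence.
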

The proof consists of computing the homotopy of the middle spectrum
and then applying the ``recognition principle''.  

\section{Some calculations in the building complex}

We want to understand how the map $\delta=q^*-f^*\colon \MF_*\ra
\MF_*(\Gamma_0(3))$ works, which in turn gives us a good understanding
of the corresponding map $\TMF\ra \TMF(\Gamma_0(3))$ in homotopy.

\begin{prop}
We have
$$\delta(\Delta^{2^r(2k+1)})=a_1^{3\cdot 2^{r+1}}a_3^{2^{r+1}(4k+1)} +
\text{higher terms in $a_1$} \pmod{2}.$$
\end{prop}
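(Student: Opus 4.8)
The plan is to reduce everything modulo $2$, exploit the fact that $q^{*}$ and $f^{*}$ are \emph{ring} homomorphisms (even though $\delta=q^{*}-f^{*}$ is not), and run an $a_{1}$-adic filtration argument. First I would record the mod $2$ reductions of \eqref{prop:formulae}: one has $f^{*}\Delta\equiv a_{1}^{3}a_{3}^{3}+a_{3}^{4}$ and $q^{*}\Delta\equiv a_{1}^{9}a_{3}+a_{1}^{6}a_{3}^{2}+a_{1}^{3}a_{3}^{3}+a_{3}^{4}\pmod 2$, so that $\delta\Delta\equiv a_{1}^{6}a_{3}^{2}+a_{1}^{9}a_{3}\pmod 2$. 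Thus, modulo $2$, the term of lowest $a_{1}$-degree in $\delta\Delta$ is $a_{1}^{6}a_{3}^{2}$ and the term of lowest $a_{1}$-degree in $f^{*}\Delta$ is $a_{3}^{4}$ (of $a_{1}$-degree $0$).

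The conceptual key is a Frobenius identity: since $\MF(\Gamma_0(3))_*/(2)$ has characteristic $2$ and $q^{*},f^{*}$ are ring maps, for any $a$ we get $\delta(a^{2})=q^{*}(a)^{2}-f^{*}(a)^{2}=(q^{*}(a)-f^{*}(a))^{2}=(\delta a)^{2}\pmod 2$, and iterating, $\delta(a^{2^{r}})\equiv(\delta a)^{2^{r}}\pmod 2$. Taking $a=\Delta^{2k+1}$ reduces the proposition to the case $r=0$, namely the claim
$$\delta(\Delta^{2k+1})\equiv a_{1}^{6}a_{3}^{8k+2}+(\text{terms of larger $a_{1}$-degree})\pmod 2;$$
raising this congruence to the $2^{r}$-th power and using $(u+v)^{2^{r}}\equiv u^{2^{r}}+v^{2^{r}}\pmod 2$ yields $a_{1}^{6\cdot 2^{r}}a_{3}^{(8k+2)2^{r}}=a_{1}^{3\cdot 2^{r+1}}a_{3}^{2^{r+1}(4k+1)}$ together with terms of strictly larger $a_{1}$-degree, as required (and ``higher terms in $a_1$'' is visibly closed under $u\mapsto u^{2}$).

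For the case $r=0$ I would write $q^{*}\Delta=f^{*}\Delta+\delta\Delta$ and expand over $\Z$, then reduce:
$$\delta(\Delta^{2k+1})=(f^{*}\Delta+\delta\Delta)^{2k+1}-(f^{*}\Delta)^{2k+1}=\sum_{j=1}^{2k+1}\binom{2k+1}{j}(f^{*}\Delta)^{2k+1-j}(\delta\Delta)^{j}.$$
Filtering by $a_{1}$-degree: by the first step the $j$-th summand has lowest $a_{1}$-degree $6j$, so every summand with $j\geq 2$ contributes only terms of $a_{1}$-degree $\geq 12$. The $j=1$ summand has coefficient $\binom{2k+1}{1}=2k+1$, which is odd, hence survives mod $2$; since $(f^{*}\Delta)^{2k}\equiv(a_{1}^{6}a_{3}^{6}+a_{3}^{8})^{k}\pmod 2$ has constant-in-$a_{1}$ term $a_{3}^{8k}$, the term of lowest $a_{1}$-degree in this summand is $a_{3}^{8k}\cdot a_{1}^{6}a_{3}^{2}=a_{1}^{6}a_{3}^{8k+2}$, with coefficient $1$. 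A glance at the expansion shows every other term of $\delta(\Delta^{2k+1})$ has $a_{1}$-degree $\geq 9$, so no cancellation occurs. This establishes the $r=0$ case, hence the proposition.

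There is no genuine obstacle here; the only point requiring care is confirming that the leading term $a_{1}^{6}a_{3}^{8k+2}$ is actually present mod $2$ and is not hit by cancellation, which the $a_{1}$-adic filtration together with the oddness of $\binom{2k+1}{1}$ settles. The slightly amusing feature is that $\delta$ itself is not multiplicative, but its restriction to $2^{r}$-th powers is ``Frobenius-multiplicative'' mod $2$, and that is exactly what drives the argument.
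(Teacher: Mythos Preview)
Your proof is correct. Both your argument and the paper's reduce to the same mod $2$ formulas for $f^*\Delta$ and $q^*\Delta$, but they organize the computation differently. The paper substitutes $a_1=t$, $a_3=1$ (using homogeneity) and exploits the factorization $1+t^3+t^6+t^9=(1+t^3)(1+t^6)$ in $\F_2[t]$, so that
\[
(q^*\Delta)^n-(f^*\Delta)^n \;\leadsto\; (1+t^3)^n\bigl((1+t^6)^n-1\bigr),\qquad n=2^r(2k+1),
\]
from which the leading term $t^{3\cdot 2^{r+1}}$ falls out directly (via $(1+t^6)^{2^r}\equiv 1+t^{6\cdot 2^r}$). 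You instead isolate the $2$-power part first by the Frobenius identity $\delta(a^{2^r})\equiv(\delta a)^{2^r}$, reducing to the odd exponent $2k+1$, and then run a binomial expansion with an $a_1$-adic filtration. The paper's route is marginally slicker because the factorization handles both the $2^r$ and the $2k+1$ in one stroke; your route has the virtue of making explicit the structural reason the $2$-power part behaves well (namely, that $\delta$ commutes with Frobenius mod $2$), which is a nice observation in its own right.
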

\begin{proof}
To get the formula, recall that 
$$f^*(\Delta)=-27\,a_3^4+a_1^3a_3^3,\qquad 
q^*(\Delta)=-19683\,a_3^4+2187\,a_1^3a_3^3-81\,a_1^6a_3^2+a_1^9a_3.$$
This gives 
$$f^*(\Delta)\equiv (a_3^4+a_1^3a_3^3),\qquad q^*(\Delta)\equiv
a_3^4+a_1^3a_3^3 +a_1^6a_3^2+a_1^9a_3 \pmod{2}.$$
Formally setting $a_1=t$ and $a_3=1$, the answer is obtained by
determining the first non-zero term in 
$$(1+t^3+t^6+t^9)^{2^r(2k+1)} - (1+t^3)^{2^r(2k+1)} \pmod {2}.$$
Since this expression is equal to
$(1+t^3)^{2^r(2k+1)}((1+t^6)^{2^r(2k+1)}-1)$, it is not hard to see
that the leading term is $t^{3\cdot 2^{r+1}}$.
\end{proof}

\begin{prop}
We have
$$\delta(c_4^k) =
2^{4+\nu_2(k)}\left[(\text{odd})\,a_1^{4(k-1)}\,a_1a_3+\text{higher
    terms in $a_1$} \right]$$
and
$$\delta(c_4^kc_6) =
2^3\left[(\text{odd})\,a_1^{4k+2}\,a_1a_3+\text{higher terms in $a_1$}
\right].$$
\end{prop}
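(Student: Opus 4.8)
The plan is to reduce the statement to a mod-$2$ binomial-coefficient computation exactly as in the preceding proposition on $\delta(\Delta^{n})$, using the explicit formulas for $f^*$ and $q^*$ on $c_4$ and $c_6$ from \eqref{prop:formulae}. First I would record the mod-$2$ reductions
$$f^*(c_4)\equiv a_1^4,\qquad q^*(c_4)\equiv a_1^4,\qquad f^*(c_6)\equiv a_1^6+a_1^3a_3,\qquad q^*(c_6)\equiv a_1^6+a_1^3a_3\pmod 2,$$
so that the leading-term information is lost mod $2$ and one must work $2$-adically. The natural device is to set $a_3=1$ and $a_1=t$ (legitimate since $\MF(\Gamma_0(3))_*$ is a domain and these are polynomial generators), and then analyze $\delta(c_4^k)=q^*(c_4)^k-f^*(c_4)^k$ as an honest polynomial identity in $\Z[\tfrac13][t]$, extracting the $2$-adic valuation of each coefficient and identifying the lowest-degree term whose coefficient is not divisible by a higher power of $2$.

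The key steps, in order: (1) Write $q^*(c_4)=a_1^4+216\,a_1a_3$ and $f^*(c_4)=a_1^4-24\,a_1a_3$, so $q^*(c_4)-f^*(c_4)=240\,a_1a_3$, and more usefully $q^*(c_4)=f^*(c_4)+240\,a_1a_3$; then expand $q^*(c_4)^k=(f^*(c_4)+240\,a_1a_3)^k$ by the binomial theorem. The degree-$(4(k-1))$-in-$a_1$ part of $\delta(c_4^k)$ comes only from the $j=1$ term $k\cdot f^*(c_4)^{k-1}\cdot 240\,a_1a_3$, whose $a_1^{4(k-1)}a_1a_3$ coefficient is $240\,k\cdot(\text{leading coeff of }f^*(c_4)^{k-1})=240\,k$; since $v_2(240)=4$ and $v_2(k)=\nu_2(k)$, this contributes $2^{4+\nu_2(k)}\cdot(\text{odd})$, matching the claim, provided the higher $j\ge2$ terms do not interfere in that same total degree — and they don't, because each factor of $240\,a_1a_3$ raises the $a_1$-degree by $1$ over a power of $f^*(c_4)$, pushing those terms strictly higher in $a_1$. (2) For $\delta(c_4^kc_6)$, write $c_4^kc_6\mapsto q^*(c_4)^kq^*(c_6)-f^*(c_4)^kf^*(c_6)$ and add and subtract $f^*(c_4)^kq^*(c_6)$ to split it as $q^*(c_6)\big(q^*(c_4)^k-f^*(c_4)^k\big)+f^*(c_4)^k\big(q^*(c_6)-f^*(c_6)\big)$; the first summand has $2$-adic valuation at least $4+\nu_2(k)\ge4$ by step (1) and lies in $a_1$-degree $\ge 4k+2$, while $q^*(c_6)-f^*(c_6)=504\,a_1^3a_3-5616\,a_3^2$ has $v_2=3$, and $f^*(c_4)^k\cdot 504\,a_1^3a_3$ contributes $504\,a_1^{4k+3}a_3=2^3(\text{odd})\,a_1^{4k+2}\,a_1a_3$; one checks the $504\,a_1^3a_3$ term dominates the $5616\,a_3^2$ term in $a_1$-degree and that the first summand is higher in $a_1$-degree, so the leading coefficient is $2^3\cdot(\text{odd})$ as claimed.

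The main obstacle I anticipate is bookkeeping the competition between ``higher $2$-adic valuation'' and ``higher $a_1$-degree'': the statement only asserts the coefficient of a \emph{specific} monomial $a_1^{4(k-1)}a_1a_3$ (resp.\ $a_1^{4k+2}a_1a_3$) is $2^{4+\nu_2(k)}\cdot(\text{odd})$ (resp.\ $2^3\cdot(\text{odd})$), with everything else bundled into ``higher terms in $a_1$,'' so the real content is showing that no lower-$a_1$-degree monomial survives and that the one named monomial genuinely has exact valuation $4+\nu_2(k)$ (resp.\ $3$). Both reduce to the observations that the $j=1$ binomial term is the unique one of minimal $a_1$-degree and that $v_2(240\,k)=4+\nu_2(k)$, $v_2(504)=3$ exactly; these are elementary, but they must be stated carefully. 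If one wants to avoid any subtlety about whether the $j=1$ term could conceivably be cancelled, one notes it cannot: every $j\ge2$ term has strictly larger $a_1$-degree, so no cancellation in that degree is possible, and the result follows.
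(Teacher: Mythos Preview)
Your approach is essentially the paper's: the paper packages your binomial computation into a lemma --- for $d>1$, $(u+2^d v)^k = u^k + 2^{d+\nu_2(k)}g(u,v)$ with $g=(\text{odd})\,u^{k-1}v+(\text{higher in }v)$ --- and applies it with $u=f^*(c_4)$, $v=15\,a_1a_3$, $d=4$; for the second identity it likewise uses $q^*(c_6)=f^*(c_6)+2^3(63\,a_1^3a_3+756\,a_3^2)$.

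Two minor slips to fix. First, $q^*(c_6)-f^*(c_6)=504\,a_1^3a_3+6048\,a_3^2$, not $504\,a_1^3a_3-5616\,a_3^2$ (you subtracted $-216$ with the wrong sign); this is harmless since only $\nu_2(504)=3$ matters. Second, the $j\ge2$ binomial terms have strictly \emph{lower} $a_1$-degree than the $j=1$ term, not higher: each factor $240\,a_1a_3$ replaces an $a_1^4$ from $f^*(c_4)$, dropping the $a_1$-degree by $3$. The monomial $a_1^{4(k-1)}\cdot a_1a_3$ is in fact the top $a_1$-degree term of $\delta(c_4^k)$.

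One point deserves tightening. In step (2) you invoke that \emph{all} of $\delta(c_4^k)$ has $2$-adic valuation $\ge 4$, but your step (1) only pins down the single coefficient of $a_1^{4k-3}a_3$. The bound $\nu_2\ge4$ is immediate (every term carries $240^j$ with $j\ge1$), so step (2) goes through; but the displayed statement actually asserts the sharper global divisibility by $2^{4+\nu_2(k)}$, and that is exactly the content of the paper's lemma. It follows from $\nu_2\binom{k}{j}\ge\nu_2(k)-\nu_2(j)$ together with $4(j-1)\ge\nu_2(j)$ for $j\ge2$, which is where the hypothesis $d>1$ is used.
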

\begin{proof}
The first equality is a straightforward application of the following
lemma to the identity $q^*(c_4)=f^*(c_4)+2^4(15\,a_1a_3)$.  The second
equality can be derived from the first together with the identity
$q^*(c_6)=f^*(c_6)+2^3(63\,a_1^2\,a_1a_3+756\,a_3^3)$. 
\end{proof}

\begin{lemma}
If $d>1$, then
$$(u+2^d\,v)^k = u^k+2^{d+\nu_2(k)}\,g(u,v),$$
where $g(u,v)\in \Z[u,v]$ has the form $g(u,v)=(\text{odd})\,
u^{k-1}v+\text{higher terms in $v$}$.
\end{lemma}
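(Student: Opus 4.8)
The plan is to prove this by induction on $\nu_2(k)$, the $2$-adic valuation of $k$. First I would dispose of the base case $\nu_2(k)=0$, i.e. $k$ odd: here we expand $(u+2^d v)^k = \sum_{j=0}^{k} \binom{k}{j} u^{k-j} (2^d v)^j$ by the binomial theorem, so $(u+2^d v)^k - u^k = \sum_{j\geq 1}\binom{k}{j} 2^{dj} u^{k-j} v^j$. The $j=1$ term is $k\,2^d\, u^{k-1}v$, and since $k$ is odd and $d = d + \nu_2(k)$, this contributes exactly $2^{d}\cdot(\text{odd})\cdot u^{k-1}v$; all terms with $j\geq 2$ are divisible by $2^{2d}$, hence (as $d>1$) lie in $2^{d+1}\Z[u,v]$ and involve only higher powers of $v$. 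So $g(u,v) = 2^{-d}\big((u+2^d v)^k - u^k\big)$ has the required form.

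For the inductive step, suppose the result holds for all exponents of valuation $<\nu_2(k)$, and write $k = 2m$. Then $(u+2^d v)^{2m} = \big((u+2^d v)^2\big)^m = (u^2 + 2^{d+1}(uv) + 2^{2d}v^2)^m$. I would set $u' = u^2$ and rewrite the inner quadratic as $u' + 2^{d+1} w$ where $w = uv + 2^{d-1}v^2 \in \Z[u,v]$ (using $d>1$ so that $d-1\geq 1$); note $w = (\text{odd})\, uv + \text{higher terms in } v$ — in fact $w$ has $uv$ as its lowest $v$-term with coefficient $1$. Now $\nu_2(m) = \nu_2(k)-1 < \nu_2(k)$, so the inductive hypothesis applies with modulus $2^{d+1}$ (and $d+1>1$): $(u' + 2^{d+1}w)^m = (u')^m + 2^{(d+1)+\nu_2(m)}\, g_1(u',w)$ where $g_1(u',w) = (\text{odd})\,(u')^{m-1} w + \text{higher terms in } w$. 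Substituting back, $(u')^m = u^{2m} = u^k$, and $(d+1)+\nu_2(m) = d + \nu_2(k)$, so $(u+2^d v)^k = u^k + 2^{d+\nu_2(k)} g(u,v)$ with $g(u,v) = g_1(u^2, w)$.

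It remains to check that $g(u,v) = g_1(u^2,w)$ has the stated shape. The lowest-degree term in $v$ of $g_1(u^2,w)$ comes from the lowest term of $(\text{odd})(u^2)^{m-1}w$, and since $w = uv + (\text{higher in }v)$, this is $(\text{odd})\, u^{2m-2}\cdot uv = (\text{odd})\, u^{k-1} v$; every other contribution (from higher terms in $w$, or from higher-in-$w$ terms of $g_1$) involves $v^j$ with $j\geq 2$. Hence $g(u,v) = (\text{odd})\, u^{k-1} v + \text{higher terms in } v$, completing the induction.

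The only real subtlety — and the step I would be most careful about — is the bookkeeping in the inductive step: one must verify that the reindexing $u \mapsto u^2$, $2^d v \mapsto 2^{d+1} w$ genuinely lands in the hypothesis of the lemma (in particular that $w\in\Z[u,v]$, which needs $d\geq 2$, and that the valuation drops by exactly one), and that tracking "lowest power of $v$" through the composition $g_1(u^2, w)$ does not accidentally cancel the leading coefficient mod $2$. Since $w$ has leading $v$-coefficient $1$ (odd) and $g_1$ has odd leading coefficient, the product stays odd, so no cancellation occurs; the higher terms only push the $v$-degree up. Everything else is a routine binomial expansion.
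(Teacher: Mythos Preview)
Your proof is correct. The paper states this lemma without proof, so there is nothing to compare against; your induction on $\nu_2(k)$ via the squaring trick $(u+2^dv)^{2m}=\bigl(u^2+2^{d+1}w\bigr)^m$ with $w=uv+2^{d-1}v^2$ is a clean way to do it, and the bookkeeping you flag (that $w\in\Z[u,v]$, that the valuation drops by one while $d$ rises by one, and that the leading $v$-coefficient stays odd through the substitution) is all sound.

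One tiny remark: the parenthetical ``using $d>1$ so that $d-1\geq 1$'' is slightly stronger than you need---all that is required for $w\in\Z[u,v]$ is $d\geq 1$, and indeed your argument (and the lemma itself) goes through for $d\geq 1$. This does not affect correctness, since the hypothesis $d>1$ is what the paper assumes.
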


\newcommand{\noopsort}[1]{} \newcommand{\printfirst}[2]{#1}
  \newcommand{\singleletter}[1]{#1} \newcommand{\switchargs}[2]{#2#1}
\providecommand{\bysame}{\leavevmode\hbox to3em{\hrulefill}\thinspace}
\providecommand{\MR}{\relax\ifhmode\unskip\space\fi MR }
\providecommand{\MRhref}[2]{%
  \href{http://www.ams.org/mathscinet-getitem?mr=#1}{#2}
}
\providecommand{\href}[2]{#2}

\end{document}